\newtheorem{Theorem}{Theorem}[section]
\newtheorem{Lemma}[Theorem]{Lemma}
\newtheorem{Corollary}[Theorem]{Corollary}
\newtheorem{Definition}[Theorem]{Definition}
\newtheorem{corollary}[Theorem]{Corollary}
\newtheorem{proposition}[Theorem]{Proposition}
\newtheorem{definition}[Theorem]{Definition}
\theoremstyle{definition}
\newtheorem{Remark}[Theorem]{Remark}
\begin{document}

\title{Diameter two properties in some vector-valued function spaces}
\keywords{diameter two property, uniform algebra, Urysohn-type lemma, Shilov boundary}
\subjclass[2010]{Primary 46B04; Secondary 46B20, 46E30, 47B38}

\author{Han Ju Lee}
\address{Department of Mathematics Education, Dongguk University - Seoul, 04620 (Seoul), Republic of Korea}
\email{hanjulee@dgu.ac.kr}

\author{Hyung-Joon Tag}
\address{Department of Mathematics Education, Dongguk University - Seoul, 04620 (Seoul), Republic of Korea}
\email{htag@dongguk.edu}

\date{2021. February 20.}

\thanks{
The first author was supported by Basic Science Research Program through the National Research Foundation of Korea(NRF) funded by the Ministry of Education, Science and Technology [NRF-2020R1A2C1A01010377].
The second author was supported by Basic Science Research Program through the National Research Foundation of Korea(NRF) funded by the Ministry of Education, Science and Technology [NRF-2020R1A2C1A01010377].
}

\begin{abstract}
We introduce a vector-valued version of a uniform algebra, called the vector-valued function space over a uniform algebra. The diameter two properties of the vector-valued function space over a uniform algebra on an infinite compact Hausdorff space are investigated. Every nonempty relatively weakly open subset of the unit ball of a vector-valued function space $A(K, (X, \tau))$ over an infinite dimensional uniform algebra has the diameter two, where $\tau$ is a locally convex Hausdorff topology on a Banach space $X$ compatible to a dual pair.  Under the assumption on $X$ being uniformly convex with norm topology $\tau$ and the additional condition that $A\otimes X\subset A(K, X)$, it is shown that Daugavet points and $\Delta$-points on $A(K, X)$ over a uniform algebra $A$ are the same, and they are characterized by the norm-attainment at a limit point of the Shilov boundary of $A$. In addition, a sufficient condition for the convex diametral local diameter two property of $A(K,X)$ is also provided. As a result, the similar results also hold for an infinite dimensional uniform algebra.
\end{abstract}

\maketitle

\section{Introduction}

 In this article, we study the diameter two properties of a uniform algebra and the vector-valued function space $A(K, (X, \tau))$ for a complex Banach space $X$ with a locally convex Hausdorff topology $\tau$ compatible with a dual pair by using the Urysohn-type lemma. The diameter two properties have gained a lot of attention recently and been studied in various Banach spaces. Exploring these properties involves weakly open subsets and slices of the unit ball $B_X$ of a Banach space $X$. Let $S_X$ be the unit sphere of a Banach space $X$ and $X^*$ be the dual space of $X$. For $x^* \in S_{X^*}$ and $\epsilon > 0$, a slice $S(x^*, \epsilon)$ of the unit ball $B_X$ is defined by $S(x^*, \epsilon) = \{x \in B_X : Re\, x^* x  > 1 - \epsilon\}$. Here we recall the definitions of the ``classical" diameter two properties introduced in \cite{ALN}.  
 \begin{Definition}(``Classical" D2Ps)
 	\begin{enumerate}[\rm(i)]
 		\item A Banach space $X$ has the strong diameter two property (SD2P) when every convex combination of slices has diameter two.
 		\item A Banach space $X$ has the diameter two property (D2P) when every nonempty relatively weakly open subset of the unit ball $B_X$ has diameter two.
 		\item A Banach space $X$ has the local diameter two property (LD2P) when every slice of the unit ball $B_X$ has diameter two.
 	\end{enumerate}
 \end{Definition}
 
 Since every weakly open subset of the unit ball contains a convex combination of slices \cite[Lemma II.1]{GGMS}, the SD2P implies the D2P. The D2P implies the LD2P because every slice is a relatively weakly open subset of the unit ball. It is well-known that these properties are weaker than the Daugavet property \cite[Theorem 4.4]{ALN} and that these properties are the other extremes of the Radon-Nikod\'ym property which states, geometrically, that there exists a nonempty slice of the unit ball with arbitrarily small diameter. The classical D2Ps are examined in various spaces. It is well-known that $c_0$ and $\ell_{\infty}$ has the SD2P. The infinite dimensional uniform algebra over a compact Hausdorff space also has the SD2P \cite{ALN, NW}, in fact, even stronger property called the symmetric strong diameter two property (SSD2P) \cite{ANP}. Other known examples with the D2P are interpolation spaces $L_1 + L_{\infty}$ and $L_1 \cap L_\infty$ equipped with certain norms \cite{AK}. These spaces do not satistfy the Daugavet property but the D2P. For a Banach space $X$, the space $C(K, (X, \tau))$ of $X$-valued continuous function over a compact Hausdorff space $K$, where $\tau$ a locally convex Hausdorff topology compatible to a dual pair, is also known to have the D2P \cite{BL}.
 
 Not only the ``classical" D2Ps mentioned above, but we also explore the ``diametral" version of the D2Ps. Studying these properties requires us to have a firm grasp on the $\Delta$-points and the Daugavet points. Let $\Delta_{\epsilon}(x) = \{y \in B_X : \|x - y\| \geq 2 -\epsilon\}$. Then a point $x \in S_X$ is said to be a $\Delta$-point if $x \in \overline{\text{conv}}\Delta_{\epsilon}(x)$ for every $\epsilon > 0$. We denote the set of all $\Delta$-points by $\Delta_X$. Similarly, a point $x \in S_X$ is said to be a Daugavet point if $B_X$ = $\overline{\text{conv}}\Delta_{\epsilon}(x)$ for every $\epsilon > 0$. Notice that every Daugavet point is a $\Delta$-point. We recall the definitions of the diametral diameter two properties introduced in \cite{AHLP, BLZ}.  

 \begin{Definition}(``Diametral" D2Ps)
 	\begin{enumerate}[\rm(i)]
 		\item A Banach space $X$ has the convex diametral local diameter two property (convex-DLD2P) if $B_X = \overline{\text{conv}}\Delta_X$, that is, the unit ball is the closed convex hull of the set of all $\Delta$-points.
 		\item A Banach space $X$ has the diametral local diameter two property (DLD2P) if for every slice $S$ of $B_X$, every $x \in S_X \cap S$, and every $\epsilon > 0$ there exists $y \in S$ such that $\|x - y\| \geq 2 - \epsilon$.  
 		\item A Banach space $X$ has the diametral diameter two property (DD2P) if for every relative weakly open subset $U$ of $B_X$, every $x \in S_X \cap U$, and every $\epsilon > 0$, there exists $y \in U$ such that $\|x-y\| \geq 2 - \epsilon$.
 	\end{enumerate}
 \end{Definition}
\noindent There is also a stronger version of the DD2P called the diametral strong diameter two property (DSD2P), which uses a convex combination of slices, but this property is now known to be equivalent to the Daugavet property \cite{K}. It was shown in \cite{BLZ} that the Daugavet property implies the DD2P and that the DD2P implies the DLD2P. The convex-DLD2P is stronger than the LD2P and weaker than DLD2P \cite{AHLP}. Moreover, we can see that the D2P is weaker than the DD2P from their definitions. If one wishes to see a clearer picture, we refer to a nice visualization of the relationship between various diameter two properties in the dissertation of Pirk (see \cite[pg 82-85]{P}). These properties have been also examined on several Banach spaces. For example, the spaces $c$ of convergent sequences and $l_\infty$ do not have the DLD2P but the convex-DLD2P \cite[Corollary 5.4, Remark 5.5]{AHLP}. Moreover, $c_0$ even fails to have the convex-DLD2P \cite[pg 18]{AHLP}.    

The DLD2P and the Daugavet property have equivalent definitions in terms of $\Delta$-points and Daugavet points. A Banach space $X$ having the DLD2P is equivalent to say that every point in $S_X$ is a $\Delta$-point (see \cite[Theorem 1.4]{IK} and \cite[Open problem (7)]{W}). Similarly, a Banach space $X$ has the Daugavet property if and only if every point in $S_X$ is a Daugavet point \cite[Corollary 2.3]{W}.

The organization of this article consists of four parts. In section 2, we provide necessary information about the uniform algebra $A$, the space $A(K, (X, \tau))$, and the Urysohn-type lemma, which is the key ingredient to prove our results throughout this article. In section 3, we make a few remarks on the symmetric strong diameter two property of a scalar-valued function algebra. In section 4, we show that the vector-valued function space $A(K, (X, \tau))$ over a uniform algebra also satisfies the D2P. By the Gelfand transform we show that a function space $A(\Omega, X)$ on a Hausdorff space $\Omega$ is isometrically isomorphic to a function space $A(M_A, (X^{**}, w^*))$ over a uniform algebra, so it is shown that $A(\Omega, X)$ also has the D2P if the base (function) algebra is infinite dimensional. This result shows D2P of some function spaces like $A_b(B_X, Y)$ and (resp. $A_u(B_X, Y)$) consisting of $Y$-valued functions which are bounded (resp. uniformly continuous) on $B_X$ and are holomorphic on the interior of $B_X$ when  $X$ and $Y$ are Banach spaces. In section 5, we present some results on the Daugavet and $\Delta$- points in a vector-valued function space $A(K, X)$ over a uniform algebra. Under the condition of $X$ being uniformly convex and the additional assumption that $A\otimes X\subset A(K, X)$, we show that these points are the same and characterized by norm-attainment at a limit point of the  Shilov boundary. Also under the same assumptions, a sufficient condition for the convex-DLD2P of $A(K, X)$ is provided.
 
 \section{Preliminaries}
 In this article, we assume that $K$ is a compact Hausdorff space and $X$, $Y$ are nontrivial complex Banach spaces unless specified. Let $X^*$ be the dual space of $X$. 
 We work with a Banach space $X$ equipped with a locally convex Hausdorff (LCH) topology $\tau$ compatible to the dual pair $(X, X^*)$  or $(X^*, X)$. Recall that $(X, \tau)$ is said to be compatible to a dual pair $(X, X^*)$ if $(X, \tau)^* = X^*$ and a LCH topology $\tau$ on $X^*$ is said to be compatible to a dual pair $(X^*, X)$ if $(X^*, \tau)^*=X$. In this paper, we say that a LCH topology $\tau$ on a Banach space $X$ is compatible to a dual pair if $(X, \tau)^* =X^*$ or $(X, \tau)^* = X_*$, where $X_*$ is the predual of $X$.
  
The weak topology on $X$ is denoted by $(X, w)$, the weak$^*$ topology on $X^*$ by $(X^*, w^*)$, and the norm topology on $X$ by $(X, \|\cdot\|)$. The Mackey topology for $X$ (resp. $X^*$) is the finest topology where every linear functional $x^* \in X^*$ (resp. linear functional defined by $x \in X$) is continuous. The Mackey-Arens theorem \cite[Theorem 8.7.4]{NB} says that a locally convex topology $\tau$ for $X$ (resp. $X^*$) compatible to a dual pair includes the weak (resp. weak$^*$-) topology and is included in the Mackey topology on $X$ (resp. $X^*$) and that is known to be coarser than the norm topology on $X$ \cite[IV.3.4]{SW} (resp. the norm topology on $X^*$ \cite[Example 8.5.5 and Example 8.8.9]{NB}).  Hence for any LCH topology $\tau$ compatible to a dual pair $(X, X^*)$ (resp. $(X^*, X)$), we have $(X, w) \subset \tau \subset (X, \|\cdot\|)$ (resp. $(X^*, w^*) \subset \tau \subset (X^*, \|\cdot\|)$). Also for such topology $\tau$ on $X$ compatible to $(X, X^*)$, $\tau$-bounded sets are weakly bounded and so norm-bounded. Similarly, for a LCH topology $\tau$ on $X^*$ compatible to $(X^*, X)$, $\tau$-bounded sets are weakly$^*$-bounded and so norm-bounded in $X^*$. For more details on the theory of topological vector spaces, we refer to \cite{NB, SW}. 

Now we show that the space $C(K, (X, \tau))$, which consists of all continuous functions from a compact Hausdorff space $K$ to $(X, \tau)$, equipped with the supremum norm is a Banach space. Even though this fact is already known in \cite{BL}, we include the proof for completeness.

\begin{proposition}\label{prop:ckxbanach}
	Let $K$ be a compact Hausdorff space, let $X$ be either a Banach space and let $\tau$ be a locally convex Hausdorff topology compatible to a dual pair. Then the space $C(K, (X, \tau))$ equipped with the supremum norm is a Banach space.
\end{proposition}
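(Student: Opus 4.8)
The plan is to check successively that $C(K,(X,\tau))$ is a linear space on which the supremum expression is a well-defined norm, and then to establish completeness; the only genuinely delicate point is the $\tau$-continuity of the limit function.

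\emph{Vector space and well-definedness of the norm.} Since $\tau$ makes $X$ a topological vector space, pointwise sums and scalar multiples of $\tau$-continuous maps $K \to X$ are again $\tau$-continuous, so $C(K,(X,\tau))$ is a linear subspace of $X^{K}$. For $f \in C(K,(X,\tau))$ the image $f(K)$ is $\tau$-compact, hence $\tau$-bounded, hence norm-bounded by the facts recalled in Section 2; therefore $\|f\|_\infty := \sup_{t \in K}\|f(t)\|$ is finite. That $\|\cdot\|_\infty$ is a norm then follows from the corresponding properties of $\|\cdot\|$ on $X$ together with the sup (for definiteness: $\|f\|_\infty = 0$ forces $\|f(t)\| = 0$, hence $f(t)=0$, for every $t$).

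\emph{Completeness.} Let $(f_n)$ be $\|\cdot\|_\infty$-Cauchy. For each $t$ the estimate $\|f_n(t) - f_m(t)\| \le \|f_n - f_m\|_\infty$ shows that $(f_n(t))$ is $\|\cdot\|$-Cauchy in the Banach space $X$, so it converges to some $f(t) \in X$; letting $m \to \infty$ in the Cauchy estimate gives $\|f_n - f\|_\infty \to 0$, whence also $\|f\|_\infty < \infty$. It remains to see that $f$ is $\tau$-continuous. Here I would use that, because $\tau \subseteq (X,\|\cdot\|)$, every $\tau$-continuous seminorm $p$ on $X$ satisfies $p \le C\|\cdot\|$ for some constant $C$, which we may assume positive (the case $p \equiv 0$ being trivial). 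Fix $t_0 \in K$, such a seminorm $p$, and $\epsilon > 0$, and choose $n$ with $\|f_n - f\|_\infty < \epsilon/(3C)$. Since $f_n$ is $\tau$-continuous, the scalar function $t \mapsto p(f_n(t) - f_n(t_0))$ is continuous on $K$ and vanishes at $t_0$, so there is a neighborhood $W$ of $t_0$ on which it is $< \epsilon/3$. Then for $t \in W$,
\[
p(f(t) - f(t_0)) \le p(f(t) - f_n(t)) + p(f_n(t) - f_n(t_0)) + p(f_n(t_0) - f(t_0)) < \epsilon ,
\]
the outer two terms each being bounded by $C\|f - f_n\|_\infty < \epsilon/3$. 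As $t_0$, $p$, $\epsilon$ were arbitrary, $f$ is $\tau$-continuous, so $f \in C(K,(X,\tau))$ and $f_n \to f$ in $\|\cdot\|_\infty$; hence the space is complete.

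\emph{Main obstacle.} The subtle step is precisely the $\tau$-continuity of the limit: the hypothesis only delivers uniform convergence in the norm of $X$, which in general is strictly finer than $\tau$, so one cannot manipulate $\tau$-neighborhoods directly. The comparison $\tau \subseteq (X,\|\cdot\|)$ from Section 2 is exactly what bridges the gap, since it forces every $\tau$-continuous seminorm to be norm-dominated, thereby letting the norm-uniform estimate control the $\tau$-oscillation of $f$.
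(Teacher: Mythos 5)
Your proof is correct and follows essentially the same route as the paper's: well-definedness of the norm via $\tau$-compactness of $f(K)$, pointwise norm-limits for completeness, and a three-term estimate for the $\tau$-continuity of the limit, with the crucial bridge in both cases being that $\tau$ is coarser than the norm topology. The only difference is cosmetic: you phrase that bridge via norm-domination of $\tau$-continuous seminorms, while the paper uses a balanced convex $\tau$-neighborhood $W$ with $W+W+W\subset V$ and $\epsilon B_X\subset W$; these are the same fact seen through Minkowski functionals.
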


\begin{proof}
	
		Let us consider when $\tau$ is a LCH topology compatible to either a dual pair $(X, X^*)$ or $(X, X_*)$. Since $f(K)$ is $\tau$-compact for every $f \in C(K, (X, \tau))$, the range $f(K)$ is bounded in the norm topology of $X$. This makes the norm $\|f\| = \sup\{\|f(t)\|_X: t \in K\}$ to be well-defined.

		To show the completeness of $C(K, (X, \tau))$, let $(f_n)_{n = 1}^{\infty} \subset C(K, (X,\tau))$ be a Cauchy sequence. Then for any $\epsilon > 0$, there exists $N$ such that for every $n,m \geq N$, we have $\|f_n - f_m\| < \epsilon$. Notice that for every $t \in K$,
		\begin{eqnarray}\label{eq:cauchy}
			 \|f_n(t) - f_m(t)\|_X \leq \|f_n - f_m\| < \epsilon,
		\end{eqnarray}
	 and so $(f_n(t))_{n=1}^{\infty}$ is a Cauchy sequence in $X$ with respect to the norm topology. Since $X$ is complete with the norm topology, $(f_n(t))_{n=1}^{\infty}$ converges in norm. So for each $t \in K$, let $f(t) = \lim_{n\rightarrow \infty} f_n(t)$. Then by (\ref{eq:cauchy}), for any $\epsilon > 0$ there exists $N$ such that for all $n \geq N$, $\sup_{t \in K}\|f_n(t) - f(t)\|_X < \frac{\epsilon}{2}$.
	
	Now we have to show that $f \in C(K, (X, \tau))$. First, let $V$ be a $\tau$-neighborhood of $0$. Choose a balanced, convex $\tau$-neighborhood $W$ of $0$ such that $W + W + W \subset V$. Since $W$ is also open in the norm topology, there exists $\epsilon B_X \subset W$ and $N \in \mathbb{N}$ such that for all $n \geq N$ and $t \in K$, $f(t) - f_n(t) \in \epsilon B_X$. Hence
		\begin{align*}
		f(t) - f(t_0) &= (f(t) - f_N(t)) + (f_N(t) - f_N(t_0)) + (f_N(t_0) - f(t_0))	\\
		&\in \epsilon B_X + (f_N(t) - f_N(t_0)) + \epsilon B_X\\
		&\subset W + (f_N(t) - f_N(t_0)) + W. 
		\end{align*}
	Moreover, $f_N$ is continuous with respect to $\tau$. So there exists a neighborhood $U$ of $t_0$ such that for every $t \in U$, $f_N(t) - f_N(t_0) \in W$. Therefore, $f(t) - f(t_0) \in W + W + W \subset V$ for all $t \in U$. This shows that $f \in C(K, (X, \tau))$. Therefore, $C(K, (X, \tau))$ is a Banach space.     
\end{proof}

Let $\Omega$ be a Hausdorff space and $C_b(\Omega)$ be the Banach algebra of all bounded continuous functions over $\Omega$ equipped it the supremum norm. A function algebra $A(\Omega)$ on $\Omega$ is a closed subalgebra of $C_b(\Omega)$ that separates the points of $\Omega$ and contains constant functions. Here separating points means for each pair $(s,t) \in \Omega\times \Omega$ with $s\neq t$, there exists $f \in A(\Omega)$ such that $f(s) \neq f(t)$. 
If $\Omega$ is a compact Hausdorff space, a function algebra is called a uniform algebra.

We introduce $(X, \tau)$-valued function space over a uniform algebra for a Banach space $X$ as follows. 
\begin{definition}\label{def:funtionspace} Let $X$ be a Banach space and $\tau$ be a LCH topology on $X$ compatible to a dual pair. 
A closed subspace  $A(K, (X, \tau))$ of $C(K, (X, \tau))$ is said to be the  $(X, \tau)$-valued function space over a uniform algebra $A$ if the following two conditions are satisfied:
\begin{enumerate}[\rm(i)]
	\item The base algebra, defined by $A:= \{y^* \circ f: f \in A(K, (X, \tau)), y^* \in (X, \tau)^*\}$,  is a uniform algebra over $K$.
	\item $\phi f \in A(K, (X, \tau))$ for every $\phi\in A$ and  $f \in A(K, (X, \tau))$ where $(\phi f)(t) = \phi(t)f(t)$ for $t \in K$.   
\end{enumerate}
\end{definition}
\noindent Notice that if $X = \mathbb{C}$, we have the usual uniform algebra $A$ on $K$. When $\tau$ is the norm topology, we call the space $A(K, (X, \tau))$ an $X$-valued function space over a uniform algebra and denote the space by $A(K, X)$. Similar vector-valued version of a uniform algebra has been studied in \cite{CGKM}.

A point $t_0 \in K$ is said to be a strong boundary point for a uniform algebra $A$ if for every neighborhood $U$ containing $t_0$, there exists $f \in A$ such that $f(t_0) = \|f\|=1$ and $\sup_{t \in K \setminus U} |f(t)| < 1$. A subset $S \subset X$ is said to be a boundary if for each $f \in A$, there exists an element $t \in S$ such that $|f(t)| = \|f\|_\infty$. The Shilov boundary $\Gamma(A)$ of $A$ is the smallest closed boundary of $A$ and it is known that it is the intersection of all closed boundaries of $A$.  For a uniform algebra, let $K_A= \{\lambda \in A^*: \|\lambda\| = \lambda(1_A) = 1\}$ and denote the set of its extreme points by $\text{ex}\,K_A$. The set $\text{ex}\,K_A$ is called the Choquet boundary of $A$. It is well-known that each $\lambda \in \text{ex}\,K_A$ are associated with some elements $x \in K$ and  uniquely represented by the Dirac measure $\delta_x$ \cite[Lemma 4.3.2]{D}. For this reason, we use $\Gamma_0(A)$ to denote the set of such elements $x\in K$ corresponding to the elements in the Choquet boundary. Also, the set of strong boundary points and $\Gamma_0(A)$ coincide with each other when we consider a uniform algebra $A$ over a compact Hausdorff space $K$ \cite[Theorem 4.3.5]{D}. It is known that the Shilov boundary $\Gamma(A)$ of a uniform algebra $A$ is the closure of its Choquet boundary ${\Gamma_0(A)}$ \cite[Corollary 4.3.7.a]{D}.

Now we recall the Urysohn-type lemma that is extensively used throughout this article. 
 
 \begin{Lemma}\cite[Lemma 2.5]{CGK}\label{th:urysohn}
 Let $A \subset C(K)$ be a uniform algebra and $\Gamma_0$ be its Choquet boundary. Then, for every open set $U \subset K$ with $U \cap \Gamma_0 \neq \emptyset$ and $0< \epsilon < 1$, there exists $f \in A$ and $t_0 \in U \cap \Gamma_0$ such that $f(t_0) = \|f\|_{\infty} = 1$, $|f(t)| < \epsilon$ for every $t \in K \setminus U$ and
 \begin{equation}\label{eq:stolz}
 |f(t)| + (1 - \epsilon)|1 - f(t)| \leq 1\,\,\, \text{for all}\,\,\, t \in K.
 \end{equation}
\end{Lemma}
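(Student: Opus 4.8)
The plan is to build $f$ as the composition of a peaking function for a strong boundary point in $U$ with a conformal map of the unit disk onto the region cut out by the Stolz-type inequality (\ref{eq:stolz}). To begin, since the Choquet boundary $\Gamma_0$ is precisely the set of strong boundary points of $A$ and $U\cap\Gamma_0\neq\emptyset$, fix a strong boundary point $t_0\in U\cap\Gamma_0$; this is the point $t_0$ of the conclusion. By definition there is $g_0\in A$ with $g_0(t_0)=\|g_0\|_\infty=1$ and $\rho:=\sup_{t\in K\setminus U}|g_0(t)|<1$. Replacing $g_0$ by a power $g_0^m$ (which preserves $g_0^m(t_0)=\|g_0^m\|_\infty=1$ while $\sup_{K\setminus U}|g_0^m|\le\rho^m$), I may assume that for any prescribed $\delta\in(0,1)$ there is $g\in A$ with $g(t_0)=\|g\|_\infty=1$ and $\sup_{t\in K\setminus U}|g(t)|<\delta$; in particular $g(K)\subseteq\overline{\mathbb{D}}$, the closed unit disk.

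Next I would study the target region $R_\epsilon:=\{w\in\C:|w|+(1-\epsilon)|1-w|\le1\}$. The map $w\mapsto|w|+(1-\epsilon)|1-w|$ is convex, so $R_\epsilon$ is a closed convex set; it is contained in $\overline{\mathbb{D}}$ and meets the unit circle only at $w=1$ (if $|w|=1$ then $|w|+(1-\epsilon)|1-w|=1+(1-\epsilon)|1-w|\ge1$, with equality only at $w=1$ since $\epsilon<1$); and $0$ lies in its interior, since $|w|+(1-\epsilon)|1-w|\le1-\tfrac{\epsilon^2}{2}$ whenever $|w|\le\tfrac{\epsilon}{2}$. Hence $\operatorname{int}R_\epsilon$ is a bounded convex domain, so $\partial R_\epsilon$ is a Jordan curve.

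Now I would invoke conformal mapping. Let $\psi:\mathbb{D}\to\operatorname{int}R_\epsilon$ be a Riemann map with $\psi(0)=0$. Because $\operatorname{int}R_\epsilon$ is a bounded convex domain, $\psi$ extends to a homeomorphism $\overline{\mathbb{D}}\to\overline{R_\epsilon}$ by Carath\'eodory's theorem; choose $\theta_0$ with $\psi(e^{i\theta_0})=1$ (possible because $1\in\partial R_\epsilon$) and set $\phi(z):=\psi(e^{i\theta_0}z)$, so that $\phi$ lies in the disk algebra, $\phi(0)=0$, $\phi(1)=1$, and $\phi(\overline{\mathbb{D}})=\overline{R_\epsilon}$. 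Using continuity of $\phi$ at $0$, pick $\delta\in(0,1)$ with $|\phi(z)|<\epsilon$ for $|z|\le\delta$, take $g$ as in the first step for this $\delta$, and put $f:=\phi\circ g$. Since the disk algebra is the uniform closure of the polynomials on $\overline{\mathbb{D}}$ and $g(K)\subseteq\overline{\mathbb{D}}$, $f$ is a uniform limit of polynomials in $g$, hence $f\in A$. It then remains only to read off the three conclusions: $f(t_0)=\phi(1)=1$, while $\|f\|_\infty\le\sup_{\overline{\mathbb{D}}}|\phi|\le1$ because $\overline{R_\epsilon}\subseteq\overline{\mathbb{D}}$, so $\|f\|_\infty=1$; for $t\in K\setminus U$ we have $|g(t)|<\delta$, hence $|f(t)|<\epsilon$; and for every $t\in K$ we have $f(t)\in\phi(\overline{\mathbb{D}})=\overline{R_\epsilon}$, which is exactly the inequality (\ref{eq:stolz}).

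I expect the only real obstacle to be the third, Stolz-region, requirement: the naive candidates --- convex combinations of $g$ with the constants $0$ or $1$, or powers of $g$ --- all fail, since $R_\epsilon$ is cone-shaped at the vertex $1$ and touches the unit circle only there, while those candidates spread values along an arc of the circle. Post-composing the peaking function with the Riemann map of $\operatorname{int}R_\epsilon$, suitably normalized to fix $0$ and $1$, is what simultaneously handles the supremum-norm attainment at $t_0$, the smallness off $U$, and the membership in $R_\epsilon$; the remaining mild points are the boundary continuity of that map (guaranteed by convexity of $R_\epsilon$) and the stability of $A$ under composition with disk-algebra functions.
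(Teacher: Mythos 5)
Your argument is correct and is essentially the proof of the cited source \cite[Lemma 2.5]{CGK}, which this paper quotes without reproving: take a peak function at a strong boundary point $t_0\in U\cap\Gamma_0$ (for a uniform algebra the Choquet boundary is exactly the set of strong boundary points), make it small off $U$ by passing to powers, and post-compose with the Carath\'eodory-extended Riemann map of $\overline{\mathbb{D}}$ onto the convex Stolz region $\{w\in\mathbb{C}:|w|+(1-\epsilon)|1-w|\le 1\}$, normalized to send $0$ to $0$ and $1$ to $1$. All the supporting facts you invoke --- convexity of that region, that it meets the unit circle only at $w=1$, the boundary extension of the conformal map, and the stability of $A$ under composition with disk-algebra functions via uniform polynomial approximation --- are sound, so there is nothing to add.
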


The following observation is useful for later. 

\begin{Lemma}\label{lem:AKXisom}
Let $X$ be a Banach space and let $\tau$ be a LCH topology compatible to a dual pair. Suppose that $A(K, (X, \tau))$ is a $(X, \tau)$-valued function space over the base algebra $A$ and $L$ is a closed boundary for $A$. The space of restrictions of elements of $A(K, (X, \tau))$ to $L$ is denoted by $A(L, (X, \tau))$ and the restrictions of elements of $A$ to $L$ is denoted by $A(L)$. Then $A(L, (X, \tau))$ is a $(X, \tau)$-valued function space over the base algebra $A(L)$ and  it is isometrically isomorphic to $A(K, (X, \tau))$.
\end{Lemma}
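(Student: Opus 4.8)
The plan is to exhibit the restriction map $R\colon A(K,(X,\tau))\to A(L,(X,\tau))$, $F\mapsto F|_L$, as a surjective linear isometry, and then to check the two defining conditions of Definition \ref{def:funtionspace} for $A(L,(X,\tau))$ with base algebra $A(L)$.

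First observe that $L$, being closed in the compact Hausdorff space $K$, is itself compact Hausdorff, so by Proposition \ref{prop:ckxbanach} the space $C(L,(X,\tau))$ is a Banach space; since restrictions of $\tau$-continuous functions are $\tau$-continuous, $R$ maps into $C(L,(X,\tau))$, and surjectivity onto $A(L,(X,\tau))$ and linearity are immediate. The crux is that $R$ is isometric. Recall that whether $(X,\tau)^*=X^*$ or $(X,\tau)^*=X_*$, the unit ball $B$ of $(X,\tau)^*$ norms $X$, i.e. $\|x\|_X=\sup_{y^*\in B}|y^*(x)|$ for every $x\in X$ (Hahn--Banach in the first case, the defining duality of a predual in the second). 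Hence, for $F\in A(K,(X,\tau))$,
\[
\|F\| = \sup_{t\in K}\|F(t)\|_X = \sup_{y^*\in B}\sup_{t\in K}|(y^*\circ F)(t)| = \sup_{y^*\in B}\|y^*\circ F\|_\infty .
\]
Since each $y^*\circ F$ lies in the uniform algebra $A$ and $L$ is a boundary for $A$, we have $\|y^*\circ F\|_\infty=\sup_{t\in L}|(y^*\circ F)(t)|$; interchanging the suprema back gives $\|F\|=\sup_{t\in L}\|F(t)\|_X=\|F|_L\|$. Thus $R$ is an isometric isomorphism, and in particular $A(L,(X,\tau))$, being the isometric image of a Banach space, is complete and hence a closed subspace of $C(L,(X,\tau))$.

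It remains to verify (i) and (ii). For (i), the base algebra of $A(L,(X,\tau))$ is $\{\,y^*\circ(F|_L) : F\in A(K,(X,\tau)),\ y^*\in(X,\tau)^*\,\}=\{(y^*\circ F)|_L\}=A(L)$, using that every element of $A$ is of the form $y^*\circ F$. The restriction map $A\to A(L)$ is an isometric algebra homomorphism because $L$ is a boundary for $A$; consequently $A(L)$ is complete, hence a closed subalgebra of $C(L)$, it separates the points of $L\subset K$ (restrictions of point-separating functions on $K$ separate points of $L$), and it contains the constants — so $A(L)$ is a uniform algebra on $L$. For (ii), given $\psi\in A(L)$ and $g\in A(L,(X,\tau))$, write $\psi=\phi|_L$ with $\phi\in A$ and $g=F|_L$ with $F\in A(K,(X,\tau))$; then $\psi g=(\phi F)|_L$ and $\phi F\in A(K,(X,\tau))$ by condition (ii) for $A(K,(X,\tau))$, so $\psi g\in A(L,(X,\tau))$.

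I do not anticipate a serious obstacle: the whole argument is the familiar ``restriction to a boundary is isometric'' computation, transferred from the scalar to the vector-valued setting by testing against functionals in $(X,\tau)^*$. The only point needing a little care is that $(X,\tau)^*$ — which may be $X^*$ or a predual $X_*$ — always norms $X$, so that the scalar boundary property of $A$ lifts to the supremum norm of the $(X,\tau)$-valued functions.
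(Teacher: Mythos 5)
Your proof is correct and follows essentially the same route as the paper's: the isometry is obtained by the same chain of suprema, testing $F$ against the unit ball of $(X,\tau)^*$ (which norms $X$) and using that $L$ is a boundary for the scalar algebra $A$, after which conditions (i) and (ii) of Definition~\ref{def:funtionspace} are checked by restricting. You merely spell out the verification of (i) and (ii) that the paper leaves as ``easy to see.''
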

\begin{proof} Given $f\in A(K, (X, \tau))$, let $f_{|L}$ be the restriction of $f$ to $L$. Then we have  
\begin{align*}
\|f\| &= \sup \{ \|f(t)\| : t\in K \} = \sup \{ |x^*f(t)| : t\in K, x^*\in B_{(X, \tau)^*} \}\\
&=\sup\{ |x^* f(t)| : t\in L, x^*\in B_{(X, \tau)^*} \}=\sup\{ \|f(t)\| : t\in L \} =\|f_{|L}\|.
\end{align*} 
So $A(L, (X, \tau))$ is a surjective isometric image of $A(K, (X, \tau))$ by the map $f\mapsto f_{|L}$ and it is a closed subspace of $C(L, (X, \tau))$. It is easy to see that $A(L, (X, \tau))$ satisfies the condition (i) and (ii) in the Definition~\ref{def:funtionspace}  and its base algebra is $A(L)$. 
This completes the proof.
\end{proof}

Since the Shilov boundary $\Gamma$ of a uniform algebra is a closed boundary of $K$, by Lemma \ref{lem:AKXisom}  or  \cite[Theorem 4.1.6]{L}), a uniform algebra is isometric to a uniform algebra of its restrictions to $\Gamma$. We need another useful lemma on the isolated point of the Shilov boundary.

\begin{Lemma}\label{lem:auxiso}
	Let $A$ be a uniform algebra on a compact Hausdorff space $K$ and let $t_0$ be an isolated point of the Shilov boundary $\Gamma$ of $A$. Then there exists a function $\phi \in A$ such that $\phi(t_0) = \|\phi\| = 1$ and $\phi(t) = 0$ for $t \in \Gamma \setminus \{t_0\}$. 
\end{Lemma}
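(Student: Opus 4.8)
The plan is to exploit the fact that an isolated point of $\Gamma$ is in particular a strong boundary point (equivalently, a point of $\Gamma_0(A)$, since $\Gamma = \overline{\Gamma_0(A)}$ forces any isolated point of $\Gamma$ to already lie in $\Gamma_0(A)$), so the Urysohn-type Lemma~\ref{th:urysohn} applies with the open set $U = \{t_0\}$, which is open in $\Gamma$ precisely because $t_0$ is isolated there. First I would pass from $A$ on $K$ to its isometric copy $A(\Gamma)$ of restrictions to the Shilov boundary $\Gamma$, which is legitimate by Lemma~\ref{lem:AKXisom} (with $X = \mathbb{C}$) or the cited reference; this replaces the ambient compact space by $\Gamma$ and the problem becomes one about a uniform algebra on $\Gamma$ whose Shilov/Choquet boundary is $\Gamma$ itself. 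Then, since $t_0$ is isolated in $\Gamma$, the singleton $U := \{t_0\}$ is an open subset of $\Gamma$ meeting $\Gamma_0(A(\Gamma)) = \Gamma_0$, so for any fixed $0 < \epsilon < 1$ Lemma~\ref{th:urysohn} produces $g \in A(\Gamma)$ with $g(t_0) = \|g\|_\infty = 1$ and $|g(t)| < \epsilon$ for all $t \in \Gamma \setminus U = \Gamma \setminus \{t_0\}$.

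The point $g$ is not yet the function we want because it need only be small, not zero, off $t_0$. To kill it exactly, I would run the Urysohn lemma along a sequence $\epsilon_n \downarrow 0$, obtaining $g_n \in A(\Gamma)$ with $g_n(t_0) = \|g_n\|_\infty = 1$ and $|g_n(t)| < \epsilon_n = 2^{-n}$ (say) on $\Gamma \setminus \{t_0\}$, and then set $\phi := \sum_{n=1}^{\infty} 2^{-n} g_n$ (with the obvious renormalisation if needed). This series converges absolutely in $C(\Gamma)$, hence in the closed subalgebra $A(\Gamma)$, so $\phi \in A(\Gamma)$; it satisfies $\phi(t_0) = \sum 2^{-n} = 1 = \|\phi\|$ and, for $t \neq t_0$, $|\phi(t)| \le \sum 2^{-n} \epsilon_n$ — which is small but still not literally zero, so a single averaging does not quite finish the job either. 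The clean fix is a telescoping/geometric trick: choose $g = g_1$ with $\|g\|_\infty = 1$, $g(t_0) = 1$, $|g| < \epsilon$ off $t_0$, and observe that $g^k \in A(\Gamma)$ for every $k$, with $g^k(t_0) = 1$, $\|g^k\|_\infty = 1$, and $|g^k(t)| < \epsilon^k \to 0$ uniformly on $\{|g| \le \epsilon\} \supset \Gamma \setminus \{t_0\}$; thus $g^k \to \phi$ uniformly on $\Gamma$ where $\phi(t_0) = 1$ and $\phi \equiv 0$ on $\Gamma \setminus \{t_0\}$, and $\phi \in A(\Gamma)$ since it is a uniform limit of elements of the closed algebra $A(\Gamma)$. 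Finally I would transport $\phi$ back to $A$ via the isometric isomorphism of Lemma~\ref{lem:AKXisom}, i.e.\ take any $\tilde\phi \in A$ whose restriction to $\Gamma$ is $\phi$; then $\|\tilde\phi\| = \|\phi\|_\infty = 1$, $\tilde\phi(t_0) = 1$, and $\tilde\phi(t) = 0$ for all $t \in \Gamma \setminus \{t_0\}$, which is exactly the claim.

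The one genuinely delicate point — and the step I expect to be the main obstacle — is the uniform convergence $g^k \to \phi$: on $\Gamma \setminus \{t_0\}$ we have $|g^k| < \epsilon^k \to 0$, but near $t_0$ itself we need $g^k(t_0) = 1$ to be consistent with a uniform limit, which is fine because $\{t_0\}$ is \emph{open} in $\Gamma$, so $\Gamma$ splits as the clopen partition $\{t_0\} \sqcup (\Gamma \setminus \{t_0\})$ and uniform convergence can be checked on each piece separately; there is no ``boundary layer'' where $g^k$ oscillates. This is precisely where isolation of $t_0$ in $\Gamma$ (rather than merely in $K$) is used, and it is also why we must first restrict to $\Gamma$: the point $t_0$ need not be isolated in $K$, so $\{t_0\}$ need not be open there and the naive argument on $K$ would fail. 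A secondary routine check is that $\phi$, being a uniform limit of elements of the closed subalgebra $A(\Gamma) \subseteq C(\Gamma)$, indeed lies in $A(\Gamma)$, and that the restriction map $A \to A(\Gamma)$ is onto (which is immediate from Lemma~\ref{lem:AKXisom}).
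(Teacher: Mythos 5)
Your proof is correct and follows essentially the same route as the paper's: both obtain a function peaking at $t_0$ whose modulus is bounded away from $1$ on $\Gamma\setminus\{t_0\}$ and then take powers, using uniform convergence on the clopen decomposition $\{t_0\}\sqcup(\Gamma\setminus\{t_0\})$ together with the isometry between $A$ and its restriction algebra on $\Gamma$ to land the limit back in $A$. The only (immaterial) difference is that the paper produces the initial peaking function directly from the definition of a strong boundary point, whereas you invoke the Urysohn-type Lemma~\ref{th:urysohn} with $U=\{t_0\}$; your intermediate series construction is a dead end, but you correctly discard it.
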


\begin{proof}
	Since $t_0$ is an isolated point of $\Gamma$, there is an open set such that $U\cap \Gamma=\{t_0\}$. The set of strong boundary points $\Gamma_0$ is dense in $\Gamma$. So the point $t_0 \in \Gamma$ is in fact a strong boundary point of $A$. By definition of a strong boundary point, there exists a function $\psi \in A$ such that $\psi(t_0) = \|\psi\| = 1$ and $\sup_{t \in K \setminus U }|\psi(t)| < 1$. Note that $\{(\psi_{|\Gamma})^n\}_{n = 1}^{\infty}$ converges uniformly in $C(L)$. By the isometry of the map from $f\in A$ to its restriction $f_{|L}$ to $L$, $\{\psi^n\}_{n=1}^\infty$ is Cauchy in $A$ and converges uniformly to $\phi$ in $A$.  Now it is clear that $\phi(t) = 0$ for $t \in \Gamma \setminus \{t_0\}$ and $\phi(t_0) = \|\phi\| = 1$. Thus, we obtain the desired function.   
\end{proof}

\section{A short remark on the SSD2P of function algebras}

First we start with what is known as the symmetric strong diameter two property (SSD2P) that is stronger than the SD2P.
\begin{definition}\cite[Definition 1.3]{ANP}
	A Banach space $X$ has the symmetric strong diameter two property (SSD2P) if whenever $n \in \mathbb{N}$, $S_1, \dots, S_n$ are slices of $B_X$, and $\epsilon > 0$, there exists $x_i \in S_i$, $i = 1,2,3,\dots, n$, and $y \in B_X$ such that $x_i \pm y \in S_i$ for every $i \in \{1, 2, 3, \dots, n\}$ and $\|y\| > 1 - \epsilon$.
\end{definition}
It has been already known from \cite[Theorem 4.2]{ALN} that the infinite dimensional uniform algebra over a compact Hausdorff space has the SSD2P and their proof method is extended to show the SSD2P of somewhat regular subspaces of the space $C_0(L)$ of continuous functions over a locally compact Hausdorff space $L$ vanishing at infinity \cite[Theorem 2.2]{ANP}.
\begin{definition}\cite[Definition 2.1]{ANP}
	Let $L$ be a locally compact Hausdorff space and $C_0(L)$ be the space of continuous functions over $L$ vanishing at infinity. A linear subspace $Y$ of $C_0(L)$ is somewhat regular, if whenever $V$ is a non-empty open subset of $L$ and $0 < \epsilon < 1$, there is an $f \in Y$ such that 
	\[
	\|f\| = 1 \,\,\,\text{and} \,\,\, |f(x)| \leq \epsilon \,\,\, \text{for every} \,\,\, x \in L\setminus V. 
	\]
\end{definition}
\noindent We make a few remarks here. In view of Lemma \ref{th:urysohn} we can easily verify that the infinite dimensional uniform algebra $A$ over a compact Hausdorff space $K$ is an example of somewhat regular subspace of $C(\Gamma)$ due to the fact that $A$ is isometric to $A(\Gamma)$ and that $\Gamma_0 = \Gamma_0(A)$ is dense in $\Gamma = \Gamma(A)$ because $K$ is compact. In addition, Lemma \ref{th:urysohn} is a special case of the Urysohn-type lemma for somewhat regular subspaces (see \cite[Lemma 2.4]{ANP}) that targets on uniform algebras. 
\begin{Theorem}\cite[Theorem 2.2]{ANP}\label{th:mainA}
The infinite dimensional uniform algebra $A$ over a compact Hausdorff space $K$ has the SSD2P.
\end{Theorem}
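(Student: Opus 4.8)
The plan is to deduce Theorem~\ref{th:mainA} directly from the cited result \cite[Theorem 2.2]{ANP}, which asserts that an infinite dimensional somewhat regular subspace of $C_0(L)$ has the SSD2P. Concretely, I would first reduce to the Shilov boundary: by Lemma~\ref{lem:AKXisom} (in the scalar case $X=\mathbb{C}$), or equivalently by \cite[Theorem 4.1.6]{L}, the uniform algebra $A$ on $K$ is isometrically isomorphic to its algebra of restrictions $A(\Gamma)$ to the Shilov boundary $\Gamma=\Gamma(A)$, which is itself a uniform algebra on the compact Hausdorff space $\Gamma$. Since the SSD2P is an isometric invariant, it suffices to prove that $A(\Gamma)$ has the SSD2P.

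Next I would verify that $A(\Gamma)$ is a somewhat regular subspace of $C(\Gamma)=C_0(\Gamma)$ (here $\Gamma$ is compact, so vanishing at infinity is vacuous). Let $V\subset\Gamma$ be a nonempty open set and $0<\epsilon<1$. The Choquet boundary $\Gamma_0=\Gamma_0(A)$ is dense in $\Gamma$, so $V\cap\Gamma_0\neq\emptyset$; hence the Urysohn-type Lemma~\ref{th:urysohn} applied to $A(\Gamma)$ (viewed as a uniform algebra on $\Gamma$ with Choquet boundary $\Gamma_0$) produces $f\in A(\Gamma)$ with $\|f\|_\infty=1$ and $|f(t)|<\epsilon$ for all $t\in\Gamma\setminus V$. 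This is exactly the defining property of somewhat regularity, so $A(\Gamma)$ is somewhat regular. As noted in the paragraph preceding the statement, this observation is essentially immediate from Lemma~\ref{th:urysohn}.

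Finally, since $A$ is infinite dimensional, so is $A(\Gamma)$, and I would invoke \cite[Theorem 2.2]{ANP} to conclude that $A(\Gamma)$ has the SSD2P, and therefore so does $A$. The one point deserving a word of care is that \cite[Theorem 2.2]{ANP} is stated over locally compact $L$ with $C_0(L)$; one should observe that a compact Hausdorff space $K$ is in particular locally compact with $C_0(K)=C(K)$, so the hypothesis is met with $L=\Gamma$. I expect the main (and essentially only) obstacle to be the bookkeeping of this identification together with the reduction to the Shilov boundary; the heart of the argument, namely the construction of norm-one functions small off a prescribed open set, is already packaged in Lemma~\ref{th:urysohn}, and the SSD2P itself is then imported wholesale from \cite{ANP}. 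In short, the theorem is a corollary of \cite[Theorem 2.2]{ANP} once one records that an infinite dimensional uniform algebra, realized on its Shilov boundary, is a somewhat regular subspace.
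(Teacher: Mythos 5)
Your proposal is correct and coincides with the paper's own treatment: the paper presents this theorem as an import of \cite[Theorem 2.2]{ANP}, justified by the remark immediately preceding it that an infinite dimensional uniform algebra, realized isometrically on its Shilov boundary $\Gamma$, is a somewhat regular subspace of $C(\Gamma)$ thanks to the density of the Choquet boundary and Lemma~\ref{th:urysohn}. Your reduction to $A(\Gamma)$, the verification of somewhat regularity, and the observation that $C_0(\Gamma)=C(\Gamma)$ for compact $\Gamma$ are exactly the bookkeeping the paper has in mind.
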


Now let $C_b(B_X)$ be the space of bounded, complex-valued, continuous functions on the unit ball $B_X$ of a Banach space $X$. The space $A_b(B_X)$ is a closed subalgebra of $C_b(B_X)$ that consists of holomorphic functions on the interior of $B_X$. The space $A_u(B_X)$ is a closed subalgebra of $C_b(B_X)$ that consists of functions in $A_b(B_X)$ that are uniformly continuous on $B_X$. From the fact that function algebras $A_b(B_X)$ and $A_u(B_X)$ are isometric to a uniform algebra on a compact Hausdorff space via the Gelfand transformation (see \cite[Proposition 2]{Lee} or Theorem~\ref{thm:Gelfand}), we can deduce the following fact on the SSD2Ps of $C_b(B_X)$, $A_b(B_X)$ and $A_u(B_X)$.

\begin{Corollary}
	Let $X$ be a Banach space. The function algebras $C_b(B_X)$,  $A_b(B_X)$ and $A_u(B_X)$ have the SSD2P. 
\end{Corollary}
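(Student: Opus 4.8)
The plan is to reduce the statement to Theorem~\ref{th:mainA} via the Gelfand transform. First I would recall (from \cite[Proposition 2]{Lee}, or from Theorem~\ref{thm:Gelfand}) that each of $C_b(B_X)$, $A_b(B_X)$ and $A_u(B_X)$ is isometrically isomorphic, as a Banach algebra, to a uniform algebra on a compact Hausdorff space --- namely to the algebra of Gelfand transforms on the relevant maximal ideal space; for $C_b(B_X)$ this is just the standard identification $C_b(B_X)\cong C(\beta B_X)$ with the Stone--\v{C}ech compactification of $B_X$. Since the symmetric strong diameter two property is formulated purely in terms of slices of the unit ball and norms, it is preserved under surjective linear isometries, so it suffices to verify that the uniform algebras produced by the Gelfand transform are infinite dimensional and then invoke Theorem~\ref{th:mainA}.

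Next I would check infinite dimensionality directly. Because $X$ is a nontrivial Banach space, $B_X$ is an infinite set (it contains a nondegenerate segment), so $C_b(B_X)$ is infinite dimensional, and hence so is its Gelfand realization as a uniform algebra on a compact space. For $A_b(B_X)$ and $A_u(B_X)$, I would fix any nonzero $\phi\in X^*$; then $\phi|_{B_X}$ is a bounded, uniformly continuous, holomorphic function, and the powers $1,\phi,\phi^2,\dots$ all lie in both algebras and are linearly independent. Thus $A_b(B_X)$ and $A_u(B_X)$ are infinite-dimensional uniform algebras after the Gelfand identification as well.

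Finally, applying Theorem~\ref{th:mainA} to each of the three infinite-dimensional uniform algebras obtained above and transporting the conclusion back along the Gelfand isometry yields the SSD2P for $C_b(B_X)$, $A_b(B_X)$ and $A_u(B_X)$.

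I do not expect a genuine obstacle here, since this is essentially a packaging of Theorem~\ref{th:mainA} with a known isometric representation; the only points that deserve explicit care are (i) citing the Gelfand-transform identification in the precise form that produces a uniform algebra on a \emph{compact} Hausdorff space, and (ii) observing that the SSD2P is an isometric invariant, so that the property transfers from the Gelfand model back to the original function algebras.
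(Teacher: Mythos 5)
Your proof is correct and follows the paper's route exactly: the paper likewise deduces the corollary from Theorem~\ref{th:mainA} by citing the Gelfand-transform realization of these algebras as uniform algebras on a compact Hausdorff space (see \cite[Proposition 2]{Lee} or Theorem~\ref{thm:Gelfand}). The only difference is that you spell out the infinite-dimensionality check (via powers of a nonzero $\phi\in X^*$) and the isometric invariance of the SSD2P, both of which the paper leaves implicit.
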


\section{The D2P of vector-valued function spaces over a uniform algebra}

Now we consider the space $A(K,(X, \tau))$ where $X$ is a Banach space with a locally convex Hausdorff topology $\tau$ compatible with a dual pair. Here the norm of the space $A = A(K)$ is denoted by $\|\cdot\|_{\infty}$. First we recall a useful fact about the D2P of a direct sum of two Banach spaces.

\begin{Lemma}\cite[Lemma 2.2]{BL}\label{lem:inftysum}
	Let $X$ be a Banach space satisfying the diameter two property. Then for any arbitrary Banach space $Y$, $X \oplus_{\infty} Y$ has the diameter two property.
\end{Lemma}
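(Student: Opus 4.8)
The plan is to reduce the statement to the diameter two property of $X$ by exploiting the fact that, for the $\ell_\infty$-sum, the weak topology is the product of the weak topologies of the summands.

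First I would record that $(X\oplus_\infty Y)^*$ is isometrically $X^*\oplus_1 Y^*$, and hence a net $(x_\alpha,y_\alpha)$ converges weakly in $X\oplus_\infty Y$ to $(x,y)$ precisely when $x_\alpha\to x$ weakly in $X$ and $y_\alpha\to y$ weakly in $Y$. Since $B_{X\oplus_\infty Y}=B_X\times B_Y$, this shows that the relative weak topology on the unit ball of $X\oplus_\infty Y$ is exactly the product of the relative weak topology on $B_X$ with the relative weak topology on $B_Y$. In particular, the sets of the form $W_X\times W_Y$, with $W_X$ relatively weakly open in $B_X$ and $W_Y$ relatively weakly open in $B_Y$, form a base for the relative weak topology on $B_{X\oplus_\infty Y}$.

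Next, given a nonempty relatively weakly open subset $U\subseteq B_{X\oplus_\infty Y}$, I would fix a point of $U$ and choose a basic neighbourhood $W_X\times W_Y\subseteq U$ of it, with $W_X$ and $W_Y$ nonempty and relatively weakly open in $B_X$ and $B_Y$ respectively. Since $X$ has the diameter two property, $\diam W_X=2$, so for every $\epsilon>0$ there exist $x_1,x_2\in W_X$ with $\norm{x_1-x_2}_X>2-\epsilon$. Fixing any $y\in W_Y$, the points $(x_1,y)$ and $(x_2,y)$ both lie in $W_X\times W_Y\subseteq U$ and $\norm{(x_1,y)-(x_2,y)}=\norm{x_1-x_2}_X>2-\epsilon$. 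Letting $\epsilon\to 0$ and using $U\subseteq B_{X\oplus_\infty Y}$ gives $\diam U=2$, which is the claim.

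I do not expect a genuine obstacle: the argument is elementary, and the only point requiring a moment's attention is the identification of the relative weak topology of the unit ball of the $\ell_\infty$-sum with the product topology, which rests on the duality $(X\oplus_\infty Y)^*=X^*\oplus_1 Y^*$.
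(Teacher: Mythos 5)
Your proof is correct, and it is essentially the standard argument behind the cited result (the paper itself only quotes this lemma from \cite{BL} without reproducing a proof): identifying $B_{X\oplus_\infty Y}=B_X\times B_Y$ and using that the weak topology of the $\ell_\infty$-sum is the product of the weak topologies reduces the claim to the diameter two property of $X$, and the verification that $\norm{(x_1,y)-(x_2,y)}=\norm{x_1-x_2}_X$ finishes it. No gaps.
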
	

Here we present the main result of this section. 
\begin{Theorem}\label{th:d2pakx}
	Let $K$ be a compact Hausdorff space and $X$ be a Banach space endowed with a locally convex Hausdorff topology $\tau$ compatible to a dual pair. If the base algebra $A$ is infinite dimensioanl, then the space $A(K, (X, \tau))$ has the diameter two property, i.e. every nonempty weakly open subset of the unit ball has the diameter two.
\end{Theorem}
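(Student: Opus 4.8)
The plan is to reduce the statement to the scalar case (Theorem~\ref{th:mainA}) together with the $\ell_\infty$-sum stability (Lemma~\ref{lem:inftysum}) by splitting off a single strong boundary point. First I would replace $K$ by the Shilov boundary $\Gamma$ of $A$: by Lemma~\ref{lem:AKXisom} the space $A(K,(X,\tau))$ is isometrically isomorphic to $A(\Gamma,(X,\tau))$ with base algebra $A(\Gamma)$, which is again an infinite dimensional uniform algebra, so we may assume $K=\Gamma$. Fix a nonempty relatively weakly open subset $U$ of the unit ball $B_{A(K,(X,\tau))}$ and a point $f_0\in U$ with $\|f_0\|$ close to $1$; it suffices to produce $g,h\in U$ with $\|g-h\|$ arbitrarily close to $2$.

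The key step is the following local decomposition. Pick a strong boundary point $t_0\in\Gamma_0(A)$ at which $\|f_0(t_0)\|_X$ is nearly maximal (possible since $\Gamma_0$ is dense in $\Gamma$ and the norm of $f_0$ is computed over $\Gamma$). Using the Urysohn-type Lemma~\ref{th:urysohn}, for small $\epsilon>0$ choose $\varphi\in A$ with $\varphi(t_0)=\|\varphi\|_\infty=1$, $|\varphi(t)|<\epsilon$ off a small neighborhood of $t_0$, and the Stolz-region estimate $|\varphi(t)|+(1-\epsilon)|1-\varphi(t)|\le 1$ for all $t$. I would then consider perturbations of the form
\begin{equation*}
f_{\pm}(t) = f_0(t) + \varphi(t)\bigl(\pm v - f_0(t)\bigr),
\end{equation*}
where $v\in X$ is a suitable vector of norm near $1$ (for instance $v$ close to $f_0(t_0)$, or, if one wants large diameter, $v=\pm f_0(t_0)$ rescaled; one checks both signs land in the right ball). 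Condition (ii) in Definition~\ref{def:funtionspace} guarantees $\varphi f_0\in A(K,(X,\tau))$, and $\varphi v\in A(K,(X,\tau))$ because $A\otimes X$-type elements are admitted (here $\varphi v$ is literally $\varphi$ times the constant function $v$, which lies in $C(K,(X,\tau))$, and membership in $A(K,(X,\tau))$ follows from (ii) applied to the constant function — or one argues via the scalar functionals $x^*\circ(\varphi v)=\varphi\cdot x^*(v)\in A$). The Stolz estimate forces $\|f_\pm\|\le 1$: at each $t$, $\|f_\pm(t)\|_X\le |1-\varphi(t)|\,\|f_0(t)\|_X+|\varphi(t)|\le |1-\varphi(t)|+|\varphi(t)|\le 1$ up to the $\epsilon$-slack, so $f_\pm\in B_{A(K,(X,\tau))}$. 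Since $\varphi$ is tiny outside a small neighborhood of $t_0$, $f_\pm$ agrees with $f_0$ up to $\epsilon$ off that neighborhood, while $f_\pm(t_0)=\pm v$; hence $f_\pm$ is as weakly close to $f_0$ as we like and lies in $U$ for $\epsilon$ small, and $\|f_+-f_0\|\ge \|f_+(t_0)-f_0(t_0)\|_X$ can be made close to $2$ (choosing $v$ near $-f_0(t_0)$ or exploiting the infinite-dimensionality of $A$ to move the "bump" to a point where $f_0$ is small, as in the scalar SD2P proof).

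The main obstacle I anticipate is verifying that the weak topology on $A(K,(X,\tau))$ is controlled finely enough: a basic weak neighborhood of $f_0$ is cut out by finitely many functionals in $A(K,(X,\tau))^*$, and unlike the sup-norm these need not be "localized" at points of $K$. To handle this I would use the standard device from the scalar D2P arguments — approximate the given weakly open $U$ from inside by a convex combination of slices (\cite[Lemma II.1]{GGMS}), or equivalently work with finitely many slice-type conditions $\operatorname{Re}\Lambda_j(f)>1-\delta$, and observe that since $\Lambda_j(f_0)$ is already close to $1$ and $f_\pm-f_0$ is supported (up to $\epsilon$) on an arbitrarily small neighborhood of $t_0$ which we may take disjoint from the finitely many "mass points" relevant to the $\Lambda_j$ (here infinite-dimensionality of $A$ is essential, exactly as in Theorem~\ref{th:mainA}), each $\Lambda_j$ changes by at most $\epsilon$ under the perturbation, keeping $f_\pm$ inside $U$. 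An alternative, cleaner route is to invoke an isometric splitting $A(K,(X,\tau))\cong Z\oplus_\infty A(K\setminus\{t_0\}?,\dots)$ induced by $\varphi$ and feed Lemma~\ref{lem:inftysum}; but since the complementary summand is not literally a function space of the same type, I expect the direct Urysohn-bump computation above, mirroring the proof of the scalar SSD2P, to be the honest path, with the weak-topology localization being the one step requiring genuine care.
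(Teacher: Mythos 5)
Your overall strategy (reduce to the Shilov boundary, then perturb by a Urysohn bump from Lemma~\ref{th:urysohn}) is in the right spirit, but there are two genuine gaps. First, your perturbations $f_\pm=(1-\epsilon)(1-\varphi)f_0\pm\varphi v$ involve the elementary tensor $\varphi\otimes v$, and nothing in Definition~\ref{def:funtionspace} puts this in $A(K,(X,\tau))$: condition (ii) only closes the space under multiplication by elements of $A$, and the constant function $v$ need not belong to $A(K,(X,\tau))$. (Showing $x^*\circ(\varphi v)=x^*(v)\varphi\in A$ for all $x^*$ does not help either; that only places $\varphi v$ in the possibly larger space $\{f: x^*\circ f\in A\}$.) The hypothesis $A\otimes X\subset A(K,X)$ is precisely the \emph{additional} assumption the paper imposes in Section 5 for the Daugavet-point results, and Theorem~\ref{th:d2pakx} is stated without it. The paper's proof stays inside the space by using only perturbations of the form $h_na$ with $h_n\in A$ and $a\in A(\Gamma,(X,\tau))$, which condition (ii) licenses.

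Second, the step ``$f_\pm$ is as weakly close to $f_0$ as we like because the perturbation is supported near $t_0$'' is the heart of the matter and is not justified: a general $\Lambda\in A(K,(X,\tau))^*$ is not localized on $K$ and has no ``mass points'' to avoid, so a single bump, however concentrated, cannot be forced to keep you inside the given weakly open set. The paper's actual device is to place infinitely many Urysohn bumps $\phi_n$ on \emph{pairwise disjoint} open subsets of $U$, so that the multipliers $h_n\in B_A$ converge to $1$ pointwise and boundedly; then, for a fixed $F\in A(\Gamma,X)^*$, the functional $h\mapsto F(ha)$ on $A$ extends by Hahn--Banach to $C(\Gamma)$, is represented by a regular Borel measure, and the bounded convergence theorem gives $F(h_na)\to F(a)$, i.e.\ $h_na\to a$ weakly, so that $h_na$ eventually lies in $W$ while $\|a-h_na\|\ge 2-3\delta$. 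Note also that this construction requires the chosen $t_0$ to be a limit point of $\Gamma$; when $\Gamma$ has isolated points the norm of $f_0$ may only be nearly attained at such a point, which is why the paper runs separate arguments for $\Gamma$ perfect, $\Gamma$ with infinitely many isolated points, and $\Gamma$ with finitely many isolated points (the last via an isometric $\oplus_\infty$-splitting and Lemma~\ref{lem:inftysum}). Your proposal omits this case analysis, and your closing suggestion to split off the point $t_0$ as an $\oplus_\infty$-summand does not repair matters, since such a splitting is exactly what fails when $t_0$ is not isolated.
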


\begin{proof}
	In view of Lemma \ref{lem:AKXisom}, we only have to show the D2P for $A(\Gamma, (X, \tau))$ where $\Gamma = \Gamma(A)$ is a Shilov boundary of a uniform algebra. Since $A$ is infinite dimensional, $\Gamma$ is infinite. We write $A(\Gamma, X)$ instead of $A(\Gamma, (X, \tau))$ for convenience. 
	
	Now let $W$ be a nonempty weakly open subset of $A(\Gamma,X)$. Three cases will be considered.
	\begin{enumerate}[\rm(i)]
		\item Assume that $\Gamma$ is perfect. Let $a \in W \cap S_{A(\Gamma,X)}$.
		Then for every $\delta >0$, there exists $t_0 \in \Gamma$ such that $\|a(t_0)\|_X > 1 - \delta$. Also we can find $f \in S_{(X, \tau)^*}$ such that $\text{Re}\, f(a(t_0)) > 1 - \delta$ because $(X, \tau)^*= X^*$ or $(X, \tau)^* = X_*$, where $X_*$ is a predual of $X$.
		
		Define $U = \{t \in \Gamma : \text{Re}\, f(a(t)) > 1- \delta\}$. Notice that $U$ contains $t_0$. 
		Since $t_0$ is a limit point of $\Gamma$, we can construct pairwise disjoint open subsets $U_n \subset U$ for $n \in \mathbb{N}$. From the fact that $\Gamma_0 = \Gamma_0(A)$ is dense in $\Gamma$, we know $U_n \cap \Gamma_0\neq \emptyset$ for all $n \in \mathbb{N}$. So by Lemma \ref{th:urysohn}, for each $n \in \mathbb{N}$ there exist $\phi_n \in A$ and $t_n \in U_n \cap \Gamma_0$ such that $\phi_n(t_n) = \|\phi_n\|_{\infty} = 1$, $|\phi_n(t)| <  \frac{\delta}{2^{n+2}}$ for $t \in \Gamma \setminus U_n$, and
		\[
		|\phi_n(t)| + \left(1 - \frac{\delta}{2^{n+2}}\right)|1 - \phi_n(t)| \leq 1 \,\,\, \text{for} \,\,\, t \in \Gamma.
		\]
		Define $g_n = 1 - 2 \phi_n$. First we want to show that $\|g_n\| \leq 1 + \frac{\delta}{2^n}$. For every $t \in \Gamma$,
		\begin{eqnarray*}
			|1 - 2\phi_n(t)| &=& \left|1 - 2\phi_n(t)  + \frac{\delta}{2^{n+2}} - \frac{\delta}{2^{n+2}} + \frac{\delta}{2^{n+1}}\phi_n(t) - \frac{\delta}{2^{n+1}}\phi_n(t)\right|\\
			&\leq & \left|\left(1 - \frac{\delta}{2^{n+2}}\right)(1 - 2\phi_n(t))\right| +\frac{\delta}{2^{n+2}} + \left|\frac{\delta}{2^{n+1}}\phi_n(t)\right|\\
			&\leq& \left|\left(1 - \frac{\delta}{2^n}\right)(1 - \phi_n(t)) - \left(1 - \frac{\delta}{2^n}\right)\phi_n(t)\right| + \frac{\delta}{2^{n}}\\
			&\leq& 1 - |\phi_n(t)| + |\phi_n(t)| + \frac{\delta}{2^{n}}\\
			&\leq & 1 + \frac{\delta}{2^n}. 
		\end{eqnarray*}
		Hence $\|g_n\|_\infty \leq 1 + \frac{\delta}{2^n}$. Define $h_n = \frac{g_n}{1 +\delta/2^n}$. Notice that $h_n \in B_{A}$ for all $n \in \mathbb{N}$ and that $h_n(t)$ converges to $1$ pointwise. 
		
		Now, let $F \in A(\Gamma,X)^*$ be a bounded linear functional on $A(\Gamma,X)$. If we define $\varphi(h) = F(h a)$ for $h \in A$, $\varphi$ is a bounded linear functional on $A$ because $|\varphi(h)| \leq \|F\|_{A(\Gamma,X)^*}\|h\|_{\infty}\|a\| < \|h\|_{\infty}\|F\|_{A(\Gamma,X)^*} < \infty$ for all $h \in A$. By the Hahn-Banach extension theorem and the Riesz representation theorem, $\varphi$ is represented by a regular complex Borel measure $\mu$ and $\varphi(h_n) = \int_\Gamma h_n d\mu$. Moreover, $\varphi(h_n) =  \int_\Gamma h_n d\mu \rightarrow \int_{\Gamma} 1 d\mu = \varphi(1)$ by the bounded convergence theorem. Hence $F(h_n a) = \varphi(h_n) \rightarrow \varphi(1) = F(a)$ and so the functions $h_n a$ converges to $a$ weakly in $A(\Gamma,X)$. Furthermore, we have
		
		\begin{eqnarray*}
			\left\|a - h_na\right\| \geq \left\|a(t_n) - h_n(t_n)a(t_n)\right\|_X &=& \left|1 - h_n(t_n)\right|\|a(t_n)\|_X\\
			&\geq& \frac{2 + \delta/2^n}{1 + \delta/2^n} \cdot \text{Re}\, f(a(t_n))\\
			&\geq& \left(2-\frac{\delta/2^n}{1 + \delta/2^n}\right)(1 - \delta)\\
			&\geq& (2 - \delta)(1 - \delta)\\
			&\geq& 2 - 3\delta.
		\end{eqnarray*}
		Since $\delta>0$ is arbitrary, the diameter of $W\cap B_{A(\Gamma, X)}$ is $2$. The proof is finished.
		
		\item Now we consider when  $\Gamma$ has infinitely many isolated points.  Let $(t_n)_{n=1}^{\infty}$ be a sequence of isolated points and $t_0$ be the accumulation point of the sequence. Let $a \in W \cap S_{A(\Gamma,X)}$. Let $U$ be a $\tau$-open neighborhood around $0$ and let $V$ be an $\tau$-open neighborhood around $a(t_0)\neq 0$ such that $U \cap V = \emptyset$.  Notice that every $\tau$-open neighborhood is also an open neighborhood in the norm topology on $X$. So by the continuity of $a$, $a^{-1}(V)$ contains infinitely many $t_n$'s. Since $U$ is norm-open, for each $n \in \mathbb{N}$ there is $\delta>0$ such that $\delta B_X \subset U$. Hence $a(t_n)  \in V \subset X\setminus\delta B_X$. That is, $\|a(t_n)\|_X > \delta$ infinitely many $n \in \mathbb{N}$. So we may assume that $\|a(t_n)\|_X > \delta$ for all $n \in \mathbb{N}$. Since $\Gamma_0$ is dense in $\Gamma$, every isolated point $t_n \in \Gamma$ is a strong boundary point. Hence by Lemma \ref{lem:auxiso}, there exists $\phi_n \in A$ such that $\phi_n(t_n) = \|\phi_n\| = 1$ and $\phi_n(t)=0$ for all $t \in \Gamma\setminus\{t_n\}$.
		
		Define $g_n = 1 + \left(\frac{1}{\|a(t_n)\|_X}-1\right)\phi_n + \left(\frac{-1}{\|a(t_{n+1})\|_X} - 1\right)\phi_{n+1}$. We see that $g_n(t)$ converges to $1$ pointwise, $\sup_{n \in \mathbb{N}}\|g_n\|_{\infty} < 1 + \frac{1}{\delta}$ and $\|g_n a\| = 1$ for every $n \in \mathbb{N}$. Hence $g_n \rightarrow 1$ weakly in $A$. Moreover, $g_n a \rightarrow a$ weakly in $A(\Gamma,X)$ by the same argument in (i). When we compute the diameter of the relatively weakly open set of $B_{A(\Gamma, X)}$, we have 
		\begin{eqnarray*}
			\text{diam} \, (W \cap B_{A(\Gamma,X)}) \geq \|g_{n+1}a - g_n a\| &\geq& \|g_{n+1}(t_{n+1})a(t_{n+1}) - g_n(t_{n+1})a(t_{n+1})\|_X\\
			&=& \frac{2}{\|a(t_{n+1})\|_X}\|a(t_{n+1})\|_X = 2.
		\end{eqnarray*}
		
		\item Finally, let us consider when $\Gamma$ has only finitely many isolated points $t_1, t_2, \dots, t_n$. Denote $\Gamma_1= \{t_1, t_2, \dots, t_n\}$ and the perfect subset of $\Gamma$ by $\Gamma_2= \Gamma \setminus \Gamma_1$. Since the set of strong boundary points are dense in $\Gamma$, we see that $t_1, t_2, \dots, t_n$ are also strong boundary points and  for each $i = 1, 2, \dots, n$, once again by Lemma \ref{lem:auxiso}, there exist $\phi_i \in A(\Gamma)$ such that $\phi_i(t_i) = 1$ and $\phi_i(t) = 0$ for all $t \neq t_i$. Define a bounded linear operator $P: A(\Gamma, X) \rightarrow A(\Gamma, X)$ by $Pf = \sum_{i = 1}^n \phi_i f$. Then it is easy to check that $P$ is a norm-one projection.
Let $B = P(A(\Gamma, X))$ and $C$ be the restrictions of $f\in A(\Gamma, X)$ to $\Gamma_2$. Then $C$ is an $(X, \tau)$-valued function space over an infinite dimensional  uniform algebra on $\Gamma_2$. We will show that $A(\Gamma, X)$ is isometrically isomorphic to $B \oplus_{\infty} C$. Then by (i), $C$ has diameter two property and Lemma~\ref{lem:inftysum} completes the proof.

For $f\in A(\Gamma, X)$, define $\Phi(f) = (P(f), f|_{\Gamma_2})$. Then $\Phi:A(\Gamma, X)\to B\oplus_{\infty} C$ is well-defined and an isometry as follows
\[
\|\Phi(f)\| = \max\{\|P(f)\|, \|f_{\Gamma_2}\|\} = \max\{\sup_{t \in \Gamma_1}\|f(t)\|_X, \sup_{t\in {\Gamma_2}}\|f(t)\|_X\} = \sup_{t \in \Gamma} \|f(t)\|_X = \|f\|. 
\]
Given $(f, g)\in B\oplus_\infty C$, there exist $f_1, f_2 \in A(\Gamma, X)$ such that $f = P(f_1)$ and $g= f_2|_{\Gamma_2}$. Let $h = P(f_1)+ f_2-P(f_2)\in A(\Gamma, X)$ and it is clear that $\Phi(h) = (P(f_1), f_2|_{\Gamma_2}) = (f, g)$. So $\Phi$ is surjective. This shows that $A(\Gamma, X)$ is isometrically isomorphic to $B \oplus_{\infty} C$ and completes the proof.
	\end{enumerate}
\end{proof}

The norm, weak and weak-* toplogies are compatible to a dual pair and we get the following.

\begin{corollary}
	Let $X$ be a Banach space and let $K$ be an infinite compact Hausdorff space. Then the spaces $A(K, X)$, $A(K, (X,w))$, and $A(K, (X^*,w^*))$ have the D2P if their corresponding base algebras are infinite dimensional.
\end{corollary}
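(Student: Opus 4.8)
The plan is to obtain the corollary as an immediate specialization of Theorem~\ref{th:d2pakx}: it suffices to observe that each of the three topologies named is a locally convex Hausdorff topology on the relevant Banach space which is compatible to a dual pair in the sense fixed in Section~2, and then to apply the theorem verbatim in each case.

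Concretely, I would argue case by case. For $A(K, X)$ the topology $\tau$ is the norm topology on $X$; it is trivially LCH, and $(X, \|\cdot\|)^* = X^*$, so it is compatible to the dual pair $(X, X^*)$. For $A(K, (X, w))$ the topology is the weak topology $w$, which is LCH, and the standard identity $(X, w)^* = X^*$ shows it is again compatible to $(X, X^*)$. For $A(K, (X^*, w^*))$ the underlying Banach space is $X^*$ and the topology is the weak-* topology $w^*$; it is LCH, and since $(X^*, w^*)^* = X$ and $X$ is a predual of $X^*$ (so that $X = (X^*)_*$), $w^*$ is compatible to a dual pair in the sense of Section~2. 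In all three cases the space is of the form $A(K, (Z, \tau))$ with $\tau$ compatible to a dual pair, so by Theorem~\ref{th:d2pakx}, if the corresponding base algebra is infinite dimensional then the space has the D2P.

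There is essentially no obstacle to carrying this out; the only point requiring a little care is the weak-* case, where one must invoke the paper's convention that a LCH topology $\tau$ on a Banach space $Z$ counts as compatible to a dual pair when $(Z, \tau)^*$ equals either $Z^*$ or a predual $Z_*$. I would also remark in passing that the stated hypothesis that $K$ is infinite is in fact automatic here: as observed in the proof of Theorem~\ref{th:d2pakx}, an infinite dimensional base algebra has infinite Shilov boundary, which forces $K$ to be infinite.
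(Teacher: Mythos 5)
Your proposal is correct and matches the paper's own derivation, which simply observes that the norm, weak, and weak-$*$ topologies are all LCH topologies compatible to a dual pair and then applies Theorem~\ref{th:d2pakx}. Your case-by-case verification and the side remark about the infiniteness of $K$ are just explicit elaborations of that same one-line argument.
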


Now we introduce a Banach space $X$-valued function space (over a function space) on a Hausdorff space and we show that this pace is isometrically isomorphic to a $(X^{**}, w^*)$-valued function space over a uniform algebra.  Let $\Omega$ be a Hausdorff space and let $X$ be a Banach space equipped with the norm topology. The space $C_b(\Omega, X)$ is the Banach space of all bounded $X$-valued continuous functions over $\Omega$ equipped it the supremum norm. 

\begin{definition}\label{def:functionspacevector}
Let $X$ be a Banach space. An $X$-valued function space over a function algebra on $\Omega$,  denoted by $A(\Omega, X)$, is a closed subspace of $C_b(\Omega, X)$ satisfying
\begin{enumerate}[\rm(i)]
	\item the base algebra $A := \{x^* \circ f: f \in A(\Omega, X)\}$ is a function algebra on $\Omega$.
	\item $fg \in A(\Omega,X)$ for every $f \in A$ and $g \in A(\Omega, X)$.
\end{enumerate}
\end{definition}
Given a function algebra $A$ on a Hausdorff space $\Omega$, let  $M_A$ be its maximal ideal space consisting of all nonzero algebraic homomorphisms from $A$ to $\mathbb{C}$. $M_A$ is a compact Hausdorff space with the Gelfand topology \cite[Theorem 11.9]{R}. The Gelfand transform $\hat{f}: M_A \rightarrow \mathbb{C}$ of $f \in A$ is defined by $\hat{f}(\phi) = \phi(f)$. Now for $g \in A(\Omega,X)$ and $x^* \in X^*$ let $\hat{g}(\phi)(x^*) = \phi(x^* \circ g)$. From the fact that
\[
|\hat{g}(\phi)(x^*)| = |\phi(x^*\circ g)| \leq \| x^* \circ g\|_{\infty} = \sup_{t \in \Omega}\{|x^*(g(t))|\} \leq \|g\| \|x^*\|_{X^*},
\] 
the mapping $\hat{g}(\phi)$ is a bounded linear functional on $X^*$. Moreover, $\hat{g} : M_A \rightarrow X^{**}$ is continuous on $M_A$ if we consider the weak$^*$-topology on $X^{**}$. Let $A(M_A, (X^{**}, w^*))$ be the set of such $\hat{g}$'s. 

\begin{Theorem}\label{thm:Gelfand}
Let $X$ be a Banach space and $A(\Omega, X)$ be an $X$-valued function space over a base algebra $A$ on a Hausdorff space $\Omega$. Then $A(\Omega, X)$ is isometrically isomorphic to an $(X^{**}, w^*)$-valued function space over a base algebra $\hat{A}$ on the compact Hausdorff space $M_A$. In fact, $M_A$ is the maximal ideal space of $A$ with the Gelfand topology. 
\end{Theorem}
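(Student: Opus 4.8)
The plan is to show that the Gelfand-type map $g\mapsto\hat g$ described just above the statement is the required isometric isomorphism, and then to verify that its range satisfies conditions (i) and (ii) of Definition~\ref{def:funtionspace} with base algebra $\hat A$. First I would record the scalar picture. Since $A$ is a closed unital subalgebra of $C_b(\Omega)$, we have $\|f^n\|_\infty=\|f\|_\infty^n$ for $f\in A$ and $n\in\N$, so the spectral radius of $f$ in $A$ equals $\|f\|_\infty$; consequently the Gelfand transform $f\mapsto\hat f$ is an isometry of $A$ onto $\hat A\subset C(M_A)$. Moreover $\hat A$ contains the constants and separates the points of $M_A$ (distinct homomorphisms disagree on some $f\in A$), and it is complete, hence closed in $C(M_A)$, being the isometric image of the Banach algebra $A$; thus $\hat A$ is a uniform algebra on the compact Hausdorff space $M_A$ (\cite[Theorem 11.9]{R}). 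The one genuinely delicate point of the whole argument is this identification $\|\hat f\|_{M_A}=\|f\|_\infty$; everything afterwards is a matter of pushing functionals $x^*\in X^*$ and the scalars $f(t)$ through the homomorphisms $\phi\in M_A$.

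Next I would treat the vector-valued map $\Psi\colon g\mapsto\hat g$. It has already been observed that $\hat g\in C(M_A,(X^{**},w^*))$, and $\Psi$ is plainly linear. For the isometry, fix $x^*\in B_{X^*}$; then $x^*\circ g\in A$ by the definition of the base algebra, so
\[
\sup_{\phi\in M_A}|\hat g(\phi)(x^*)|=\sup_{\phi\in M_A}|\phi(x^*\circ g)|=\|\widehat{x^*\circ g}\|_{M_A}=\|x^*\circ g\|_\infty .
\]
Taking the supremum over $x^*\in B_{X^*}$ and interchanging the two suprema yields $\|\hat g\|=\sup_{t\in\Omega}\|g(t)\|_X=\|g\|$. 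Hence $\Psi$ is a linear isometry; by the very definition of $A(M_A,(X^{**},w^*))$ as the set of such $\hat g$'s, $\Psi$ maps $A(\Omega,X)$ onto it, and the image, being isometric to the complete space $A(\Omega,X)$, is a closed subspace of $C(M_A,(X^{**},w^*))$.

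Finally I would check conditions (i) and (ii) of Definition~\ref{def:funtionspace} for $A(M_A,(X^{**},w^*))$. For (i), note $(X^{**},w^*)^*=X^*$, and for $x^*\in X^*$ one has $(x^*\circ\hat g)(\phi)=\hat g(\phi)(x^*)=\phi(x^*\circ g)=\widehat{x^*\circ g}(\phi)$, so $x^*\circ\hat g=\widehat{x^*\circ g}$; since $\{x^*\circ g:x^*\in X^*,\ g\in A(\Omega,X)\}=A$, the base algebra of $A(M_A,(X^{**},w^*))$ is exactly $\hat A$, which is a uniform algebra by the first step. For (ii), given $f\in A$ and $g\in A(\Omega,X)$ we have $fg\in A(\Omega,X)$ by Definition~\ref{def:functionspacevector}(ii), and for all $\phi\in M_A$ and $x^*\in X^*$, using that $\phi$ is multiplicative and $f(t)$ is scalar-valued,
\[
\widehat{fg}(\phi)(x^*)=\phi\big(x^*\circ(fg)\big)=\phi\big(f\cdot(x^*\circ g)\big)=\phi(f)\,\phi(x^*\circ g)=\hat f(\phi)\,\hat g(\phi)(x^*),
\]
so $\widehat{fg}=\hat f\hat g$; thus $\hat f\hat g\in A(M_A,(X^{**},w^*))$, which is condition (ii) relative to the base algebra $\hat A$. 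Combining the three steps shows that $\Psi$ is an isometric isomorphism of $A(\Omega,X)$ onto the $(X^{**},w^*)$-valued function space $A(M_A,(X^{**},w^*))$ over $\hat A$ on $M_A$, as claimed.
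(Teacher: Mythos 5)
Your proposal is correct and follows essentially the same route as the paper: establish that $g\mapsto\hat g$ is an isometry onto $A(M_A,(X^{**},w^*))$ and then verify conditions (i) and (ii) of Definition~\ref{def:funtionspace}, identifying the base algebra as $\hat A$ via the identity $x^*\circ\hat g=\widehat{x^*\circ g}$. The only (harmless) divergence is in the reverse norm inequality: you invoke the spectral-radius identity $\|\hat f\|_{M_A}=\|f\|_\infty$ for the scalar algebra and interchange suprema, whereas the paper obtains the same bound more concretely from the point evaluations $\phi_t\in M_A$; both are valid.
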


We split the proof of Theorem \ref{thm:Gelfand} into several parts for readability. First we show that the mapping $g\mapsto \hat{g}$ is an isometry.
\begin{proposition}
Let $X$ be a Banach space, let $A(\Omega, X)$ be an $X$-valued function space over a base algebra $A$ on $\Omega$ and let $M_A$ be the maximal ideal space of $A$. Then for every $g \in A(\Omega, X)$ and $x^* \in X^*$, the mapping $g \mapsto \hat{g}$ where $\hat{g} \in A(M_A, (X^{**}, w^*))$ is defined by $\hat{g}(\phi)(x^*) = \phi(x^* \circ g)$ is an isometry. 
\end{proposition}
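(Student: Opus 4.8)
The goal is to establish that $\|\hat g\|=\|g\|$ for every $g\in A(\Omega,X)$; linearity of $g\mapsto\hat g$ is immediate from the formula $\hat g(\phi)(x^*)=\phi(x^*\circ g)$, so this will complete the proof. The plan is to unwind $\|\hat g\|$ as a double supremum and squeeze it between $\|g\|$ from above and below. Unwinding the definitions and using that the norm of a functional on $X^*$ is the supremum of its absolute values over $B_{X^*}$,
\[
\|\hat g\|=\sup_{\phi\in M_A}\|\hat g(\phi)\|_{X^{**}}=\sup_{\phi\in M_A}\ \sup_{x^*\in B_{X^*}}|\hat g(\phi)(x^*)|=\sup_{\phi\in M_A}\ \sup_{x^*\in B_{X^*}}|\phi(x^*\circ g)|.
\]

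For the upper bound I would use that each $\phi\in M_A$ is a nonzero multiplicative linear functional on the unital Banach algebra $A$ and hence automatically contractive, $\|\phi\|\le 1$ (a standard fact, via Gelfand--Mazur), together with the observation that for $x^*\in B_{X^*}$ the function $x^*\circ g$ lies in $A$ with $\|x^*\circ g\|_\infty=\sup_{t\in\Omega}|x^*(g(t))|\le\sup_{t\in\Omega}\|g(t)\|_X=\|g\|$. Thus $|\phi(x^*\circ g)|\le\|x^*\circ g\|_\infty\le\|g\|$, and taking suprema gives $\|\hat g\|\le\|g\|$.

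For the lower bound I would restrict attention to the evaluation homomorphisms: for each $t\in\Omega$ the map $\delta_t\colon h\mapsto h(t)$ is a nonzero homomorphism $A\to\mathbb{C}$, hence $\delta_t\in M_A$. Then $\hat g(\delta_t)(x^*)=\delta_t(x^*\circ g)=x^*(g(t))$ for every $x^*\in X^*$, i.e. $\hat g(\delta_t)$ is exactly the canonical image of $g(t)$ in $X^{**}$, whose norm equals $\|g(t)\|_X$ since the canonical embedding $X\hookrightarrow X^{**}$ is isometric (Hahn--Banach). Hence
\[
\|\hat g\|\ge\sup_{t\in\Omega}\|\hat g(\delta_t)\|_{X^{**}}=\sup_{t\in\Omega}\|g(t)\|_X=\|g\|,
\]
and combining the two bounds yields $\|\hat g\|=\|g\|$.

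There is no genuinely hard step here; the only points deserving a word of care are the automatic contractivity of multiplicative functionals on a Banach algebra and the routine interchange of the two suprema (equivalently, one could simply invoke that the Gelfand transform on the function algebra $A$ is an isometry, since its norm is the supremum norm and so equals the spectral radius, but the argument above is self-contained).
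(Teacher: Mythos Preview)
Your proof is correct and follows essentially the same approach as the paper: both obtain the upper bound from contractivity of elements of $M_A$ and the lower bound by restricting to point evaluations $\delta_t\in M_A$. Your presentation is in fact slightly cleaner, since you observe directly that $\hat g(\delta_t)$ is the canonical image of $g(t)$ in $X^{**}$, whereas the paper writes out the supremum over $B_{X^*}$ and (unnecessarily) restricts to the set $\Omega_0=\{t:g(t)\neq 0\}$.
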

\begin{proof}
	By the fact that $\|\phi\|_{\infty} = 1$ we obtain
	\begin{align*}
		\|\hat{g}\| = \sup_{\phi \in M_A}\{\|\hat{g}(\phi)\|\} &= \sup_{\phi \in M_A}\sup_{ x^* \in B_{X^*}}\{|\hat{g}(\phi)(x^*)|\} = \sup_{\phi \in M_A}\sup_{ x^* \in B_{X^*}}\{|\phi(x^*\circ g)\}\\
		&\leq \sup_{ x^* \in B_{X^*}}\{\|x^*\circ g\|_{\infty}\}\\
		&= \sup_{x^* \in B_{X^*}} \sup_{t \in \Omega} \{|(x^* \circ g)(t)|\}\\
		&\leq \sup_{t \in \Omega} \{\|g(t)\|_X\} = \|g\|.
	\end{align*}
	To show the reverse inequality, let $\phi_t \in M_A$ be an evaluation functional at $t \in \Omega_0 = \{t \in \Omega : g(t) \neq 0 \}$. Then we have
	\begin{align*}
		\|g\| = \sup_{t \in \Omega_0} \{\|g(t)\|_X\}  &= \sup_{t \in \Omega_0} \sup_{x^* \in B_{X^*}}\{|(x^* \circ g(t)|\} = \sup_{t \in \Omega_0}\sup_{ x^* \in B_{X^*}}\{|\phi_t(x^*\circ g)\}\\
		&\leq \sup_{\phi \in M_A}\sup_{ x^* \in B_{X^*}}\{|\phi(x^*\circ g)\}\\
		&=\sup_{\phi \in M_A}\sup_{ x^* \in B_{X^*}}\{|\hat{g}(\phi)(x^*)|\}\\
		&\leq  \sup_{\phi \in M_A}\{\|\hat{g}(\phi)\|\} = \|\hat{g}\|,
	\end{align*}
	so the mapping $g \mapsto \hat{g}$ is an isometry.
\end{proof}

Now we show that the space $A(M_A, (X^{**}, w^*))$ satisfies the condition (i) and (ii) in the Definition~\ref{def:funtionspace} for a $(X, \tau)$-valued function space over a uniform algebra on a compact Hausdorff space. 

\begin{proposition}\label{prop:conduno}
Let $X$ be a Banach space, let $A(\Omega, X)$ be a $X$-valued function space over a base algebra $A$ on $\Omega$, and let $M_A$ be the maximal ideal space of $A$. Then the space \[A_1 = \{x^* \circ \hat{g}: \hat{g} \in A(M_A, (X^{**}, w^*))\,\,\, \text{and} \,\,\, x^* \in X^*\}\]  is a uniform algebra over $M_A$.
\end{proposition}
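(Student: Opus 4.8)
The plan is to observe that $A_1$ coincides with the Gelfand image $\hat A$ of the base algebra $A$, and then to conclude via the elementary Gelfand theory of commutative unital Banach algebras.

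\textbf{Step 1: $A_1=\hat A$.} First I would rewrite $x^*\circ\hat g$ as a scalar-valued function on $M_A$: for $\phi\in M_A$ we have $(x^*\circ\hat g)(\phi)=\hat g(\phi)(x^*)=\phi(x^*\circ g)$. Setting $h:=x^*\circ g$, which lies in the base algebra $A$ by Definition~\ref{def:functionspacevector}(i), and recalling that $\hat h(\phi)=\phi(h)$, one gets $x^*\circ\hat g=\widehat{x^*\circ g}=\hat h\in\hat A$. Conversely, every $h\in A$ has the form $x^*\circ g$ for some $g\in A(\Omega,X)$ and $x^*\in X^*$, so $\hat h=x^*\circ\hat g\in A_1$. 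Hence $A_1=\hat A$. I would also record in passing that $x^*\circ\hat g$ is genuinely continuous on $M_A$: $\hat g$ maps $M_A$ continuously into $(X^{**},w^*)$, and $x^*\in X^*$ is $w^*$-continuous on $X^{**}$ by the very definition of the topology $\sigma(X^{**},X^*)$.

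\textbf{Step 2: $\hat A$ is a uniform algebra over $M_A$.} Recall that $M_A$ is compact Hausdorff in the Gelfand topology \cite[Theorem 11.9]{R}. The Gelfand transform $h\mapsto\hat h$ is a unital algebra homomorphism of $A$ into $C(M_A)$, so $\hat A$ is a subalgebra of $C(M_A)$ containing the constant functions ($\widehat{1_A}=1$). It separates the points of $M_A$ by construction of the maximal ideal space: if $\phi_1\ne\phi_2$ then $\hat h(\phi_1)=\phi_1(h)\ne\phi_2(h)=\hat h(\phi_2)$ for some $h\in A$. Finally, to see that $\hat A$ is closed in $C(M_A)$, I would use that $A$ is a closed subalgebra of $C_b(\Omega)$ with the supremum norm, so $\|h^2\|_\infty=\|h\|_\infty^2$ for every $h\in A$; hence the spectral radius of $h$ equals $\|h\|_\infty$, and since $\|\hat h\|_{C(M_A)}$ equals the spectral radius of $h$, the Gelfand transform is an isometry. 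As $A$ is complete, $\hat A$ is complete, hence closed in $C(M_A)$. Therefore $A_1=\hat A$ is a uniform algebra over $M_A$.

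I do not expect a genuine obstacle here; the one point worth stating explicitly is the closedness of $\hat A$, which is precisely where the supremum norm enters — via $\|h^2\|=\|h\|^2$ — to force the Gelfand transform to be isometric, for the Gelfand image of a general commutative unital Banach algebra need not be closed. Everything else is a direct unwinding of the definitions, the essential simplification being the identity $x^*\circ\hat g=\widehat{x^*\circ g}$.
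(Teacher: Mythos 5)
Your proof is correct and follows essentially the same route as the paper: both identify $A_1$ with the Gelfand image of the base algebra $A$ via the identity $x^*\circ\hat g=\widehat{x^*\circ g}$ and then invoke the isometric isomorphism $A\cong\hat A$ to transfer closedness, point separation, and the constants. The only difference is that you spell out why the Gelfand transform is isometric (spectral radius equals sup norm since $\|h^2\|=\|h\|^2$), a step the paper simply asserts.
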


\begin{proof}
From the definition of the Gelfand transformation, notice that $x^*\circ \hat{g}(\phi) = \phi(x^* \circ g) = \widehat{x^* \circ g}(\phi)$. Hence we can rewrite $A_1 = \{\widehat{x^*\circ g}: g \in A(\Omega, X)\,\,\, \text{and} \,\,\, x^* \in X^*\}$. We see that the set $A_1$ is, in fact, the image of the Gelfand transformation on $A$. Thus $A$ is isometrically isomorphic to $A_1$ and so $A_1$ is a closed subalgebra of $C(M_A)$ that separates the points of $M_A$ and contains constant functions. This shows that $A_1$ is a uniform algebra over $M_A$.
\end{proof}

\begin{proposition}\label{prop:condtres}
Suppose that $X$ is a Banach space and  $A(\Omega, X)$ is an $X$-valued function space over a base algebra $A$ on $\Omega$. Let $M_A$ be the maximal ideal space of $A$ and let
$$A_1 = \{x^* \circ \hat{g}: \hat{g} \in A(M_A, (X^{**}, w^*))\,\,\, \text{and} \,\,\, x^* \in X^*\}.$$
Then $\phi \cdot \hat{g} \in A(M_A, (X^{**}, w^*))$ for every $\phi \in A_1$ and $ \hat{g} \in A(M_A, (X^{**}, w^*))$. 
\end{proposition}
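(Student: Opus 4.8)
The plan is to reduce the statement to condition (ii) of Definition~\ref{def:functionspacevector} for $A(\Omega,X)$ together with the multiplicativity of the homomorphisms in $M_A$. First I would unwind the hypothesis $\phi\in A_1$: by Proposition~\ref{prop:conduno} (and the identity $x^*\circ\hat g(\psi)=\widehat{x^*\circ g}(\psi)$ observed there) the set $A_1$ is exactly the Gelfand image of the base algebra $A$, so there is some $f\in A$ with $\phi=\hat f$. Likewise, by the very definition of $A(M_A,(X^{**},w^*))$, the given $\hat g$ is the Gelfand transform of some $g\in A(\Omega,X)$. Condition (ii) for $A(\Omega,X)$ then gives $fg\in A(\Omega,X)$, hence $\widehat{fg}\in A(M_A,(X^{**},w^*))$; so the whole proposition will follow once I show $\phi\cdot\hat g=\widehat{fg}$.

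To prove that identity I would check it pointwise on $M_A$, and for each point pointwise on $X^*$ (two elements of $X^{**}$ agree as soon as they agree on all of $X^*$). Fix $\psi\in M_A$ and $x^*\in X^*$. Since $(fg)(t)=f(t)g(t)$, one has $(x^*\circ(fg))(t)=x^*\bigl(f(t)g(t)\bigr)=f(t)\,(x^*\circ g)(t)$, i.e. $x^*\circ(fg)=f\cdot(x^*\circ g)$ as an element of $A$ — here one uses that $x^*\circ g\in A$ by the definition of the base algebra, so this is a genuine product inside the algebra $A$. Since $\psi$ is an algebra homomorphism,
\[
\widehat{fg}(\psi)(x^*)=\psi\bigl(x^*\circ(fg)\bigr)=\psi(f)\,\psi(x^*\circ g)=\hat f(\psi)\cdot\hat g(\psi)(x^*)=\bigl(\phi(\psi)\,\hat g(\psi)\bigr)(x^*).
\]
As $x^*\in X^*$ was arbitrary, $\widehat{fg}(\psi)=\phi(\psi)\,\hat g(\psi)=(\phi\cdot\hat g)(\psi)$, and since $\psi\in M_A$ was arbitrary, $\phi\cdot\hat g=\widehat{fg}$, which lies in $A(M_A,(X^{**},w^*))$.

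I do not expect a serious obstacle here; the delicate points are purely bookkeeping. One must be careful that the two identifications are genuine — that $A_1$ coincides with the Gelfand image of $A$, so $\phi$ really is $\hat f$ for some $f\in A$, and that every member of $A(M_A,(X^{**},w^*))$ really is $\hat g$ for some $g\in A(\Omega,X)$ — and one must note that $x^*\circ g\in A$, since that is exactly what licenses splitting the product $\psi\bigl(f\cdot(x^*\circ g)\bigr)=\psi(f)\,\psi(x^*\circ g)$ by multiplicativity of $\psi$. Everything else is the routine unwinding of the definition $\hat g(\psi)(x^*)=\psi(x^*\circ g)$.
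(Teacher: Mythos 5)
Your proposal is correct and follows essentially the same route as the paper: identify $\phi$ with the Gelfand transform of an element of the base algebra, use condition (ii) of Definition~\ref{def:functionspacevector} to place the product $fg$ in $A(\Omega,X)$, and verify $\widehat{fg}=\phi\cdot\hat g$ pointwise on $M_A$ and $X^*$. If anything, you are more explicit than the paper about the key step — that $x^*\circ(fg)=f\cdot(x^*\circ g)$ is a product inside $A$, so the multiplicativity of each homomorphism $\psi\in M_A$ splits it — which the paper's displayed computation passes over silently.
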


\begin{proof}	
Let $\hat{f}, \hat{g} \in A(M_A, (X^{**}, w^*))$ and let $x^* \circ \hat{f} \in A_1$ where $x^* \in (X^{**}, w^*)^* = X^*$. We claim that $(x^*\circ \hat{f}) \cdot \hat{g} \in A(M_A, (X^{**}, w^*))$. Note first that $h= (x^*\circ f) \cdot g \in A(\Omega, X)$. We need to show that $\hat h = (x^*\circ \hat{f}) \cdot \hat{g}$.

Since $\widehat{x^* \circ f}(\phi) = x^* \circ \hat{f}(\phi) = \phi(x^* \circ f)$ for every nonzero algebra homomorphism $\phi \in M_A$, we have 
\[\hat h (\phi)(y^*) = (\widehat{x^* \circ f} \cdot \hat{g})(\phi)(y^*) = (x^* \circ \hat f)(\phi) \cdot \hat{g}(\phi)(y^*) =( (x^*\circ \hat f)\cdot \hat g) (\phi)(y^*)\]
for $y^* \in X^*$. Therefore, $\hat h = (x^*\circ \hat{f}) \cdot \hat{g}$ and this proves our claim.        
\end{proof}

Theorem~\ref{th:d2pakx} and Theorem~\ref{thm:Gelfand} prove the following.

\begin{corollary}
Suppose that $X$ is a Banach space and $A(\Omega, X)$ is an $X$-valued function space over a function algebra $A$ on a Hausdorff space $\Omega$. If the base algebra $A$ is infinite dimensional, then $A(\Omega, X)$ has the D2P.
\end{corollary}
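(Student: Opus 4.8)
The plan is to deduce the corollary directly from the Gelfand representation of Theorem~\ref{thm:Gelfand} together with the vector-valued D2P result of Theorem~\ref{th:d2pakx}. First I would apply Theorem~\ref{thm:Gelfand} to obtain an isometric isomorphism
\[
A(\Omega, X) \;\cong\; A\bigl(M_A, (X^{**}, w^*)\bigr),
\]
where $M_A$ is the maximal ideal space of $A$ with the Gelfand topology, hence a compact Hausdorff space, and the base algebra of the right-hand side is $\hat A$ (equivalently the algebra $A_1$ of Proposition~\ref{prop:conduno}), the image of $A$ under the Gelfand transform. Since the diameter two property is an isometric invariant — it is phrased purely in terms of the norm and the weak topology of the unit ball — it suffices to establish the D2P for $A(M_A, (X^{**}, w^*))$.

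Next I would verify that this space falls under the scope of Theorem~\ref{th:d2pakx}. The weak-$^*$ topology $w^*$ on $X^{**}$ is a locally convex Hausdorff topology satisfying $(X^{**}, w^*)^* = X^*$, and one may take the predual of $X^{**}$ to be $X^*$; thus $w^*$ is a LCH topology on the Banach space $X^{**}$ compatible to a dual pair in the sense fixed in Section~2. Moreover $M_A$ is compact Hausdorff. It then remains only to check that the base algebra $\hat A$ is infinite dimensional: by Proposition~\ref{prop:conduno} the Gelfand transform is an isometric isomorphism of $A$ onto $\hat A$, so $\hat A$ is infinite dimensional exactly because $A$ is, which is the hypothesis.

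With all hypotheses of Theorem~\ref{th:d2pakx} in place, that theorem gives the D2P for $A(M_A, (X^{**}, w^*))$, and pulling this back through the isometry of Theorem~\ref{thm:Gelfand} yields the D2P for $A(\Omega, X)$. I do not anticipate a genuine obstacle here; the argument is essentially bookkeeping, the only point deserving a line of care being the confirmation that the pair $\bigl(X^{**}, w^*\bigr)$ really fits the paper's notion of a Banach space equipped with a compatible LCH topology — which it does precisely because that notion was set up to allow the dual pairing to be witnessed by a predual.
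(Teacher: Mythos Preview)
Your proposal is correct and follows exactly the approach the paper intends: the paper's own proof consists solely of the sentence ``Theorem~\ref{th:d2pakx} and Theorem~\ref{thm:Gelfand} prove the following,'' and you have simply spelled out the details of how those two results combine. The points you flag for care --- that $w^*$ on $X^{**}$ is compatible to a dual pair via the predual $X^*$, that $\hat A$ inherits infinite-dimensionality from $A$ through the Gelfand isometry, and that D2P is an isometric invariant --- are all straightforward and correctly handled.
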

For Banach spaces $X$ and $Y$, let $C_b(B_X, Y)$ be the bounded $Y$-valued continuous functions on the unit ball $B_X$. The space $A_b(B_X, Y)$ is defined to be a closed subspace of  $C_b(B_X, Y)$ that consists of holomorphic funcions on the interior of $B_X$ and  the space $A_u(B_X, Y)$ is a closed subspace of $A_b(B_X, Y)$ consisting of uniformly continuous functions on $B_X$. It is easy to check that $C_b(B_X, Y)$, $A_b(B_X, Y)$ and $A_u(B_X, Y)$ are $Y$-valued function spaces  in Definition~\ref{def:functionspacevector}. Hence we prove the following.

\begin{corollary}
Let $X$ and $Y$ be Banach spaces. Then $C_b(B_X, Y)$, $A_b(B_X, Y)$ and $A_u(B_X, Y)$ have the D2P.
\end{corollary}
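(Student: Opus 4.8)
The plan is to deduce this corollary directly from the preceding one, so the only real content is to verify that $C_b(B_X,Y)$, $A_b(B_X,Y)$, and $A_u(B_X,Y)$ fit Definition~\ref{def:functionspacevector} and that the relevant base algebra is infinite dimensional. For the first point, take $A(\Omega,X)$ in that definition to be one of these three spaces with $\Omega = B_X$ (as a topological space) and with the range Banach space called $Y$ rather than $X$; each is by construction a closed subspace of $C_b(B_X,Y)$, so condition (i) amounts to checking that $A := \{y^*\circ f : f\in A(\Omega,Y)\}$ is a function algebra on $B_X$, and condition (ii) that $\phi f\in A(\Omega,Y)$ whenever $\phi\in A$ and $f\in A(\Omega,Y)$.

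For condition (ii), the observation is that for each of the three spaces, $A$ is (contained in) the corresponding scalar space $C_b(B_X)$, $A_b(B_X)$, or $A_u(B_X)$: if $f$ is bounded continuous then so is $y^*\circ f$; if $f$ is moreover holomorphic on the interior then $y^*\circ f$ is holomorphic there (composition of a holomorphic $Y$-valued map with a bounded linear functional); and if $f$ is uniformly continuous then so is $y^*\circ f$. These scalar classes are closed subalgebras of $C_b(B_X)$ separating points and containing constants, hence function algebras, and each is closed under multiplying a vector-valued function of the same type, so $\phi f$ lands back in $C_b(B_X,Y)$, $A_b(B_X,Y)$, resp. $A_u(B_X,Y)$; that gives (ii). Strictly speaking $A$ could be a proper subalgebra of the full scalar space, but it still separates points of $B_X$ (the coordinate functionals, or even a single nonzero $y^*$ composed with the constants $b\mapsto x$, separate points since $Y\neq\{0\}$ and $y^*$ can be chosen to distinguish any two prescribed images) and contains the constants, so it is a function algebra; and any function algebra that separates the points of the infinite set $B_X$ is infinite dimensional, which is all we need.

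Having identified each space as an $X$-valued function space over an infinite dimensional base algebra, I would simply invoke the previous corollary (which combines Theorem~\ref{thm:Gelfand} and Theorem~\ref{th:d2pakx}): an $X$-valued function space over an infinite dimensional function algebra on a Hausdorff space has the D2P. Applying this three times yields the D2P for $C_b(B_X,Y)$, $A_b(B_X,Y)$, and $A_u(B_X,Y)$.

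The proof is essentially a verification with no hard analytic core; the only point needing a little care is making sure the base algebra $A = \{y^*\circ f\}$ genuinely separates points and is therefore a legitimate function algebra and infinite dimensional, rather than implicitly assuming $A$ is all of the scalar algebra. Once that bookkeeping is in place the result is immediate from the corollary.

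\begin{proof}
It suffices to check that each of $C_b(B_X,Y)$, $A_b(B_X,Y)$ and $A_u(B_X,Y)$ is a $Y$-valued function space over an infinite dimensional base algebra on the Hausdorff space $B_X$, and then apply the previous corollary.

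Let $\mathcal{A}$ denote any one of these three spaces; by definition it is a closed subspace of $C_b(B_X,Y)$. Put $A = \{y^*\circ f : f\in\mathcal{A},\ y^*\in Y^*\}$. If $f$ is bounded and continuous, so is $y^*\circ f$; if in addition $f$ is holomorphic on the interior of $B_X$, then $y^*\circ f$ is holomorphic there; and if $f$ is uniformly continuous on $B_X$, so is $y^*\circ f$. Hence $A\subseteq C_b(B_X)$, $A\subseteq A_b(B_X)$, or $A\subseteq A_u(B_X)$ in the respective cases. In each case $A$ contains the constant functions and separates the points of $B_X$: given $x_1\neq x_2$ in $B_X$, pick $y^*\in Y^*$ and a nonzero $y\in Y$, and the function $b\mapsto \big(y^*(y)\big)\,(b\text{-th coordinate})$—or more simply any nonconstant scalar function in $A$ separating $x_1$ from $x_2$, which exists since the coordinate functionals on $B_X$ belong to $A_u(B_X)\subseteq A_b(B_X)\subseteq C_b(B_X)$ after composing with a suitable $y^*$—takes different values at $x_1$ and $x_2$. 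Moreover $A$ is closed under the relevant multiplications, so $\phi f\in\mathcal{A}$ for every $\phi\in A$ and $f\in\mathcal{A}$. Thus conditions (i) and (ii) of Definition~\ref{def:functionspacevector} hold, and $\mathcal{A}$ is a $Y$-valued function space over the base algebra $A$.

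Finally, $A$ is a function algebra that separates the points of the infinite set $B_X$ (here we use that $X$ is nontrivial), so $A$ is infinite dimensional. By the previous corollary, $\mathcal{A}$ has the D2P. Applying this to $C_b(B_X,Y)$, $A_b(B_X,Y)$ and $A_u(B_X,Y)$ completes the proof.
\end{proof}
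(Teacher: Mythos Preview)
Your approach is correct and matches the paper's, which simply asserts that these three spaces satisfy Definition~\ref{def:functionspacevector} and then invokes the preceding corollary. Your worry about $A$ being a proper subalgebra (and the slightly loose verification that $A$ is itself a \emph{closed} algebra) disappears once you note that for fixed nonzero $y\in Y$ and $y^*\in Y^*$ with $y^*(y)=1$, every $\phi$ in the scalar space satisfies $\phi = y^*\circ(\phi\otimes y)$ with $\phi\otimes y\in\mathcal{A}$, so the base algebra $A$ is exactly $C_b(B_X)$, $A_b(B_X)$, or $A_u(B_X)$ respectively.
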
  

\section{Daugavet points and $\Delta$-points on vector-valued function spaces over a uniform algebra}

In this section, we present another application of Lemma \ref{th:urysohn} to the Daugavet points and $\Delta$-points of $A(K,X)$ with respect to its Shilov boundary of the base algebra. Recall that a Banach space $X$ has the Daugavet property if for every rank-one operator $T:X \rightarrow X$ satisfies $\|I + T\| = 1 + \|T\|$. Well-known examples are $L_1(\mu)$ and $L_{\infty}(\mu)$ where $\mu$ is a nonatomic measure \cite{FS, Lo} and $C(K)$ where $K$ does not have isolated points \cite{Dau}. An infinite dimensional uniform algebra over a compact Hausdorff space satisfies the Daugavet property if the Shilov boundary does not contain isolated points \cite{W2, Wo}. More generally, if a locally compact Hausdorff space $L$ does not have isolated points, somewhat regular subspaces of $C_0(L)$ also have the Daugavet property \cite{ANP}. Similar results are extended to vector-valued spaces. The space $C(K, X)$ has the Daugavet property if $K$ does not have isolated points \cite{W}. A subspace $A^X$ of $C(K, X)$, defined by $A^X = \{f \in C(K, X): x^* \circ f \in A\}$ where $A$ is a uniform algebra, is known to satisfy the Daugavet property from the fact that it satisfies the polynomial Daugavet property \cite{CGKM}. 

We mentioned earlier that we can investigate the Daugavet property and the DLD2P of Banach spaces through the Daugavet and $\Delta$-points. So it is clear that the nonexistence of isolated points in the Shilov boundary of an infinite dimensional uniform algebra implies that every point on the unit sphere $S_A$ is a Daugavet point. However, a characterization of Daugavet points and $\Delta$-points on an infinite dimensional uniform algebra has not been known. In \cite{AHLP}, the $\Delta$-points and the Daugavet points for $C(K)$ are determined by norm-attainment at a limit point of $K$. Motivated by this result, we provide a characterization for the Daugavet points and the $\Delta$-points on the vector-valued function space $A(K, X)$ over a uniform algebra when $X$ is uniformly convex. First we have the following observation on the $\Delta$-points.

\begin{Lemma}\cite[Lemma 2.1]{AHLP}\label{lem:deltapoint}
	Let $X$ be a Banach space. The following are equivalent:
	\begin{enumerate}[\rm(i)]
		\item $x \in S_X$ is a $\Delta$-point.
		\item for every $x^* \in X^*$ with $x^*x = 1$, the projection $P = x^* \otimes x$ satisfies $\|I - P\| \geq 2$.
	\end{enumerate}
\end{Lemma}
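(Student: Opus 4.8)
The plan is to prove the two implications separately, using throughout the elementary identity
\[
\|I-P\| = \sup\{\,\|y-x^*(y)x\| : y\in B_X\,\},
\]
valid because $(I-P)y=y-x^*(y)x$ and $(I-P)x=0$. Since $x^*(x)=1$ forces $\|x^*\|\ge 1$, I first dispose of the case $\|x^*\|\ge 3$: picking $y\in B_X$ with $|x^*(y)|$ close to $\|x^*\|$ gives $\|y-x^*(y)x\|\ge |x^*(y)|-\|x\|\ge 2$, so $\|I-P\|\ge 2$ automatically. Hence for (i)$\Rightarrow$(ii) one may assume $1\le\|x^*\|\le 3$, while for (ii)$\Rightarrow$(i) one is free to pick a convenient $x^*$, and I will take one of norm as close to $1$ as needed.

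For (i)$\Rightarrow$(ii), let $x$ be a $\Delta$-point and $x^*\in X^*$ with $x^*(x)=1$, $\|x^*\|\le 3$, and fix a small $\delta>0$. Since $x\in\overline{\text{conv}}\,\Delta_\delta(x)$ and the supremum of the continuous linear functional $\text{Re}\,x^*$ over a closed convex hull equals its supremum over the set, with $\text{Re}\,x^*(x)=1$, there is $y\in\Delta_\delta(x)$ (so $\|y\|\le 1$, $\|y-x\|\ge 2-\delta$) with $\text{Re}\,x^*(y)>1-\delta$. Choose $g\in S_{X^*}$ norming $y-x$, so $\text{Re}\,g(y-x)=\|y-x\|\ge 2-\delta$; this forces $\text{Re}\,g(y)>1-\delta$ and $\text{Re}\,g(x)<-(1-\delta)$, and since $|g(x)|\le 1$ also $|\text{Im}\,g(x)|<\sqrt{2\delta}$. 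Then
\[
\|(I-P)y\|\ \ge\ \text{Re}\,g\big(y-x^*(y)x\big)\ =\ \text{Re}\,g(y)-\text{Re}\big(x^*(y)g(x)\big),
\]
and writing $x^*(y)=c+id$ (with $c>1-\delta$, $|d|\le\|x^*\|\le 3$) and $g(x)=-\alpha+i\beta$ (with $\alpha>1-\delta$, $|\beta|<\sqrt{2\delta}$) one computes $-\text{Re}\big(x^*(y)g(x)\big)=c\alpha+d\beta>(1-\delta)^2-3\sqrt{2\delta}$, hence $\|(I-P)y\|>(1-\delta)+(1-\delta)^2-3\sqrt{2\delta}$. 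Since $\delta>0$ was arbitrary, $\|I-P\|\ge 2$.

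For (ii)$\Rightarrow$(i), I argue by contraposition. Suppose $x$ is not a $\Delta$-point and fix $\epsilon_0>0$ with $x\notin C:=\overline{\text{conv}}\,\Delta_{\epsilon_0}(x)$; note $-x\in\Delta_{\epsilon_0}(x)\subseteq C$. By Hahn--Banach there is $g\in S_{X^*}$ with $\sup_{z\in C}\text{Re}\,g(z)<\text{Re}\,g(x)$; in particular $\text{Re}\,g(x)>0$, and $\text{Re}\,g(x)\le 1$. To turn $g$ into a usable functional, tilt it toward a norming functional $x_0^*\in S_{X^*}$ of $x$: the convex combinations $g_\lambda:=(1-\lambda)g+\lambda x_0^*$ have norm $\le 1$, still satisfy $\sup_C\text{Re}\,g_\lambda<\text{Re}\,g_\lambda(x)-\rho$ for some $\rho=\rho(\lambda)>0$ (using $\sup_C\text{Re}\,x_0^*\le 1$), and $\text{Re}\,g_\lambda(x)\to 1$ as $\lambda\to 1$. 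Choosing $\lambda$ close enough to $1$, set $h:=g_\lambda$ and $x^*:=h/h(x)$, so that $x^*(x)=1$ and $1\le\|x^*\|\le 1/\text{Re}\,h(x)$ is as close to $1$ as we wish, while $h=h(x)\,x^*$ still strictly separates $x$ from $C$ with margin $\rho>0$. Now suppose $\|I-P\|\ge 2$ for $P:=x^*\otimes x$ and pick $y_n\in B_X$ with $\|y_n-x^*(y_n)x\|\ge 2-1/n$. From $\|y_n-x^*(y_n)x\|\le 1+|x^*(y_n)|\le 1+\|x^*\|$ we get $|x^*(y_n)|$ close to $1$; replacing $y_n$ by $e^{-i\arg x^*(y_n)}y_n$ we may assume $\lambda_n:=x^*(y_n)=|\lambda_n|\ge 0$, and a short estimate gives $\|y_n-x\|\ge (2-1/n)-\bigl||\lambda_n|-1\bigr|>2-\epsilon_0$ for $\lambda$ chosen close enough to $1$ and $n$ large, so $y_n\in\Delta_{\epsilon_0}(x)\subseteq C$. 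But then $h(y_n)=h(x)\lambda_n$ gives $\text{Re}\,h(y_n)=\lambda_n\,\text{Re}\,h(x)\ge(1-1/n)\text{Re}\,h(x)$, which for large $n$ exceeds $\text{Re}\,h(x)-\rho\ge\sup_C\text{Re}\,h$ — contradicting $y_n\in C$. Hence $\|I-P\|<2$, so (ii) fails.

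The step requiring the most care is the tilting in (ii)$\Rightarrow$(i): a separating functional $g$ need not satisfy $\text{Re}\,g(x)=1$ and, in a complex space, one cannot simply rotate it to make $g(x)$ real and positive without destroying the separation; so one must perturb $g$ toward a norming functional of $x$ while keeping strict separation, and control the norm of the resulting $x^*=h/h(x)$ precisely enough that near-maximizers of $\|I-P\|$ are forced to have $x^*(y_n)$ near $1$. Everything else reduces to the identity for $\|I-P\|$, the definition of a $\Delta$-point, and routine estimates.
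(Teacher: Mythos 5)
Your proof is correct. Note that the paper itself gives no argument for this lemma --- it is quoted verbatim from \cite[Lemma 2.1]{AHLP} --- so there is no in-paper proof to compare against; the cited source proves it by first inserting the intermediate characterization ``for every slice $S\ni x$ and every $\epsilon>0$ there is $y\in S$ with $\|x-y\|\ge 2-\epsilon$'' between (i) and (ii). Your argument fuses those two steps: the direction (i)$\Rightarrow$(ii) extracts $y\in\Delta_\delta(x)$ directly from $x\in\overline{\text{conv}}\,\Delta_\delta(x)$ by evaluating $\mathrm{Re}\,x^*$ on the convex hull, and the converse runs the Hahn--Banach separation against near-maximizers of $\|I-P\|$. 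Two features of your write-up are genuinely worth having in the complex setting of this paper: the preliminary reduction $\|I-P\|\ge\|x^*\|-1$, which disposes of functionals of large norm (where the slice $\{\mathrm{Re}\,x^*>1-\epsilon\}$ is not well controlled), and the explicit bookkeeping of imaginary parts --- the bound $|\mathrm{Im}\,g(x)|\le\sqrt{2\delta}$ in one direction and the rotation $y_n\mapsto e^{-i\arg x^*(y_n)}y_n$ together with the tilt of the separating functional toward a norming functional in the other. All the individual estimates check out (in particular $\mathrm{Re}\,g_\lambda(x)-\sup_C\mathrm{Re}\,g_\lambda\ge(1-\lambda)\rho_0$ and $\|y_n-x\|\ge(2-1/n)-\max\{1/n,\|x^*\|-1\}$), and the order of quantifiers --- $\epsilon_0$, then $g$, then $\lambda$, then $n$ --- is consistent.
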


We show that if $f \in S_{A(K, (X, \tau))}$ attains its norm at a limit point on the Shilov boundary of its base algebra with the additional condition $A\otimes X\subset A(K, (X, \tau))$, it is a Daugavet point. For a function algebra $A$ on $\Omega$ and a Banach space $X$, $A\otimes X$ means the set of all functions $f\otimes x$ ($f\in A$, $x\in X$) defined by $(f\otimes x)(t) = f(t)x$ ($t\in \Omega$).

\begin{Theorem}\label{th:dpointakx}
Let $K$ be a compact Hausdorff space and let $X$ be a Banach space endowed with a locally convex Hausdorff topology $\tau$ compatible to a dual pair. Suppose that a function space $A(K, (X, \tau))$ over a base algebra $A$ satisfies the additional condition that $A\otimes X\subset A(K, (X, \tau))$ and let $\Gamma$ be the Shilov boundary of $A$. If  $f$ is a norm-one element of  $A(K, (X, \tau))$ and there is a limit point $t_0$ of $\Gamma$ such that $\|f\| = \|f(t_0)\|_X$, then $f$ is a Daugavet point.
\end{Theorem}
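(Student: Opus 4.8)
The plan is to show directly that $B_{A(K,(X,\tau))} = \overline{\mathrm{conv}}\,\Delta_{\epsilon}(f)$ for every $\epsilon > 0$, by approximating an arbitrary $g \in B_{A(K,(X,\tau))}$ within $\epsilon$ (in norm) by a convex combination of elements of $\Delta_{\epsilon'}(f)$ for a suitable $\epsilon'$. The key geometric device is the same as in the proof of Theorem~\ref{th:d2pakx}: since $t_0$ is a limit point of $\Gamma$ and $\Gamma_0 = \Gamma_0(A)$ is dense in $\Gamma$, we can find small pairwise disjoint open sets $U_1,\dots,U_N$ clustering at $t_0$, each meeting $\Gamma_0$, and apply Lemma~\ref{th:urysohn} to produce peak-type functions $\phi_j \in A$ with $\phi_j(t_j) = \|\phi_j\|_\infty = 1$ at points $t_j \in U_j \cap \Gamma_0$, with $|\phi_j|$ tiny off $U_j$ and satisfying the Stolz-region estimate \eqref{eq:stolz}. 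The extra hypothesis $A \otimes X \subset A(K,(X,\tau))$ is what lets us build genuine elements of the function space by combining these scalar peak functions with fixed vectors of $X$.

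First I would normalize: choose $t_0$ with $\|f(t_0)\|_X = 1$ and pick $x^* \in S_{(X,\tau)^*}$ with $\mathrm{Re}\,x^*(f(t_0)) > 1 - \delta$, and set $U = \{t \in \Gamma : \mathrm{Re}\,x^*(f(t)) > 1-\delta\}$, an open neighborhood of $t_0$. Given $g \in B_{A(K,(X,\tau))}$, the candidate elements of $\Delta_\epsilon(f)$ will be of the form $f_j = (1-\phi_j)f + \phi_j\,(g \circ \text{something})$ — more precisely one wants $f_j = f - \phi_j f + \phi_j g$, which lies in $A(K,(X,\tau))$ by condition (ii) of Definition~\ref{def:funtionspace}, and whose norm I must control using \eqref{eq:stolz} much as $\|g_n\|$ was estimated in Theorem~\ref{th:d2pakx}. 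At the peak point $t_j$ one has $f_j(t_j) = g(t_j)$, while off $U_j$ the function $f_j$ is close to $f$; meanwhile $\|f - f_j\| \geq \|f(t_j) - f_j(t_j)\|_X$ will not directly be large, so instead the distance $2-\epsilon$ should be detected by pairing against the functional that peaks on $f$ at $t_0$ together with the localization of $\phi_j$ near $t_0$. The cleaner route is: show each $f_j \in \Delta_{\epsilon}(f)$ by exhibiting, via Lemma~\ref{lem:deltapoint} applied pointwise or via an explicit functional, that $\|f - f_j\|$ is at least $2 - O(\delta)$ using that $f_j$ agrees with $-f$-ish behavior forced by the Stolz estimate near $t_0$; and then show the average $\frac1N\sum_{j=1}^N f_j$ is within $O(\delta) + \frac{C}{N}$ of $g$ in norm, because $\sum_j \phi_j$ is close to an indicator supported near $t_0$ and the $U_j$ are disjoint, so at most one term differs substantially from $g$ at any point while the rest are $\approx f - f_j + f_j$...

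The honest accounting is that $\frac1N\sum_j f_j = f - \bigl(\frac1N\sum_j \phi_j\bigr)f + \bigl(\frac1N\sum_j \phi_j\bigr)g$; since the $U_j$ are disjoint and $|\phi_j|$ is $\leq \delta/2^{j+2}$ off $U_j$, at each $t$ the quantity $\frac1N\sum_j|\phi_j(t)|$ is at most $\frac1N + \delta$, which tends to $0$ — so this average is close to $f$, not to $g$, which is the wrong target. This signals that one must instead use a single large peak rather than an average: take $N$ large, and for the approximation of $g$ use that near $t_0$ we can make $\phi = \phi_1$ with $\phi(t_0)\approx 1$, so $f - \phi f + \phi g$ is close to $g$ on a neighborhood of $t_0$ and close to $f$ elsewhere, but that element is not in $B_X$ unless $\|f\| $-type cancellation via \eqref{eq:stolz} is invoked — exactly as $g_n = 1 - 2\phi_n$ was shown to be almost norm-one. \textbf{The main obstacle} is therefore this simultaneous balancing: producing elements that (a) lie in the unit ball, using the Urysohn/Stolz estimate for the cancellation, (b) are far from $f$ (distance $\to 2$), detected at the peak points $t_j$, and (c) have convex combinations norm-approximating an \emph{arbitrary} $g \in B_X$ — and it is (c) together with (a) that is delicate, since forcing unit norm via cancellation near $t_0$ tends to pull the element back toward $f$. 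I expect the resolution to mirror the $c_0$-type argument: split $\Gamma$ near $t_0$ into many disjoint pieces, on piece $j$ interpolate between $f$ and (a rescaling of) $g$ using $\phi_j$, use disjointness so the pieces don't interact, use \eqref{eq:stolz} on each piece for the norm bound, and let the number of pieces grow so the "defect" regions near each $t_j$ become negligible in the convex combination while each individual function stays in $\Delta_\epsilon(f)$ because it looks like $-f$ near $t_0$ on its own peak scale.
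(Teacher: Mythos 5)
There is a genuine gap: you never arrive at the construction that makes the argument work, and the candidates you do write down fail for the reasons you yourself diagnose. Your ansatz $f_j=(1-\phi_j)f+\phi_j g$ uses $f$ as the global background and grafts $g$ onto the small set $U_j$; as you correctly compute, the average $\frac1N\sum_j f_j$ then converges to $f$ rather than to the arbitrary $g$, and in addition the distance $\|f-f_j\|$ is detected nowhere, since $f_j$ agrees with $f$ off $U_j$ and equals $g(t_j)$ at the peak, which need not be far from $f(t_j)$. Your closing suggestion (``on piece $j$ interpolate between $f$ and a rescaling of $g$'') still has the roles reversed, so the ``simultaneous balancing'' you flag as the main obstacle is left unresolved.

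The paper resolves it by swapping the roles. Choose $U\ni t_0$ so that $\|f(t)-f(t_0)\|_X<\epsilon$ on $U$ (not the set cut out by a functional, which is not what is needed here), take disjoint $U_n\subset U$ meeting $\Gamma_0$, get $\phi_n$ and $t_n\in U_n\cap\Gamma_0$ from Lemma~\ref{th:urysohn} with $|\phi_n|<\frac{\epsilon}{3\cdot 2^n}$ off $U_n$, and set
\[
g_n(t)=\Bigl(1-\tfrac{\epsilon}{3\cdot 2^n}\Bigr)\bigl(1-\phi_n(t)\bigr)\,g(t)-\phi_n(t)\,f(t_0),
\]
so that $g$ is the background and the fixed vector $-f(t_0)$ is the value grafted onto the peak. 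Then the Stolz estimate \eqref{eq:stolz} gives $\|g_n\|\le 1$ pointwise; $g_n(t_n)=-f(t_0)$ gives $\|f-g_n\|\ge\|f(t_n)+f(t_0)\|_X\ge 2\|f(t_0)\|_X-\|f(t_n)-f(t_0)\|_X\ge 2-\epsilon$, so $g_n\in\Delta_\epsilon(f)$; and $\|g-g_n\|\le 2$ on $U_n$ while $\|g-g_n\|<\epsilon/2^n$ off $U_n$, so disjointness of the $U_n$ forces $\bigl\|g-\frac1m\sum_{n=1}^m g_n\bigr\|\le\frac1m\bigl(2+\epsilon\bigr)\to 0$. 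Note also that the hypothesis $A\otimes X\subset A(K,(X,\tau))$ is needed precisely for the term $\phi_n\otimes f(t_0)$; condition (ii) of Definition~\ref{def:funtionspace}, which suffices for your $f_j$, does not produce these $g_n$.
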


\begin{proof}
Suppose that $\|f\|=\|f(t_0)\|_X = 1$ for a limit point $t_0$ of $\Gamma$. Since $\Gamma$ is closed, $t_0$ is an element of $\Gamma$.
Let $A(\Gamma, (X, \tau))$ be set of all restrictions of $f\in A(K, (X, \tau))$ to the Shilov boundary $\Gamma$.
By Lemma~\ref{lem:AKXisom}, the mapping, defined by $g\in A(K, (X, \tau))\mapsto g|_\Gamma$, is an isometry. So it is enough to show that $f|_\Gamma$ is a Daugavet point of $A(\Gamma, (X, \tau))$ and we may assume that $K=\Gamma$. Denote the base algebra of $A(K, (X, \tau))$ as $A$.

Now fix $g \in B_{A(K, (X, \tau))}$ and  $\epsilon > 0$. Let $U$ be an open set containing $t_0$ such that $\|f(t) - f(t_0)\|_X < \epsilon$ for every $t \in U$. By the Hausdorff condition and the fact that $t_0$ is a limit point of $\Gamma$, we can find pairwise disjoint nonempty open subsets $U_1, U_2, \dots$ such that $U_n \subset U$ for each $n \in \mathbb{N}$. Since the set of strong boundary points is dense in $\Gamma$, each $U_n$ contains a strong boundary point. By Lemma \ref{th:urysohn}, for each $n\in \mathbb{N}$, there exist $\phi_n \in A$ and $t_n \in U_n \cap \Gamma_0$ such that $\phi_n(t_n) = \|\phi_n\|_{\infty} = 1$, $|\phi_n(t)| < \frac{\epsilon}{3\cdot2^n}$ for all $t \in K \setminus U_n$, and
\[
	|\phi_n(t)| + \left(1 - \frac{\epsilon}{3\cdot2^n}\right)|1 - \phi_n(t)| \leq 1\,\,\, \text{for all}\,\,\, t \in K.
\]
Now for each $n\in\mathbb{N}$, define a function $g_n(t) = (1-\frac{\epsilon}{3\cdot2^n})(1- \phi_n(t))g(t) - \phi_n(t)f(t_0)$. Then $g_n$ is an element of  $A(K, (X, \tau))$ and we show that $g_n \in B_{A(K, (X, \tau))}$ for all $n \in \mathbb{N}$. It is easy to see that for all $t \in K$,
	\[
	\|g_n(t)\|_X = \left\|(1-\frac{\epsilon}{3\cdot2^n})(1- \phi_n(t))g(t) - \phi_n(t)f(t_0)\right\|_X \leq \left(1 - \frac{\epsilon}{3\cdot2^n}\right)| 1-\phi_n(t)| + |\phi_n(t)| \leq 1.   
	\]
Hence $\|g_n\| \leq 1$. Notice that $g_n(t_n) = -f(t_0)$. This implies that 
	\[
	\|f - g_n\| \geq \|f(t_n) - g_n(t_n)\|_X = \|f(t_n) + f(t_0)\|_X > 2\|f(t_0)\|_X - \|f(t_n) - f(t_0)\|_X \geq  2 - \epsilon,  
	\]
and so $g_n \in \Delta_{\epsilon}(f)$. Furthermore, for every $t \in U_n$ we see that
	\begin{eqnarray*}
		\|g(t) - g_n(t)\|_X &=& \left\|g(t) - \left(1-\frac{\epsilon}{3\cdot2^n}\right)(1- \phi_n(t))g(t) + \phi_n(t)f(t_0)\right\|_X\\
		&=& \left\|\frac{\epsilon}{3\cdot2^n} g(t) + \left(1 -\frac{\epsilon}{3\cdot2^n}\right)\phi_n(t)g(t) + \phi_n(t)f(t_0)\right\|_X\\
		&\leq& \frac{\epsilon}{3\cdot2^n} + \left(1 - \frac{\epsilon}{3\cdot2^n}\right) + 1 = 2.
	\end{eqnarray*}
Since $|\phi_n(t)| < \frac{\epsilon}{3\cdot2^n}$ for $t \in K \setminus U_n$, we also have
\begin{align*}
	\|g(t) - g_n(t)\|_X &= \left\|\frac{\epsilon}{3\cdot2^n} g(t) + \left(1 -\frac{\epsilon}{3\cdot2^n}\right)\phi_n(t)g(t) + \phi_n(t)f(t_0)\right\|_X \\
	&\leq \frac{\epsilon}{3\cdot2^n} + \left(1 - \frac{\epsilon}{3\cdot2^n}\right)\frac{\epsilon}{3\cdot2^n} + \frac{\epsilon}{3\cdot2^n} < \frac{\epsilon}{2^n}.
\end{align*}
From the fact that each $U_n$'s are pairwise disjoint, we obtain, for each $t \in \cup_{n = 1}^m U_n$,
	\[
	\left\|g(t) - \frac{1}{m}\sum_{n = 1}^m g_n(t)\right\|_X \leq \frac{1}{m}\left(2 +\left(\frac{\epsilon}{2} + \frac{\epsilon}{4} + \dots + \frac{\epsilon}{2^m}\right)\right) = \frac{1}{m}\left(2 + \left(1 - \frac{1}{2^m}\right)\epsilon\right)   
	\]
and for each $t \in K \setminus \cup_{n = 1}^m U_n$, we have
	\[
	\left\|g(t) - \frac{1}{m}\sum_{n = 1}^m g_n(t)\right\|_X \leq   \frac{1}{m} \left(\frac{\epsilon}{2} + \frac{\epsilon}{4} + \dots + \frac{\epsilon}{2^m}\right)= \frac{1}{m}  \left(1 - \frac{1}{2^m}\right)\epsilon.
	\] 
Hence,  we get
	\[
	\left\|g - \frac{1}{m}\sum_{n = 1}^m g_n\right\|\leq \frac{1}{m}\left(2 + \left(1 - \frac{1}{2^m}\right)\epsilon\right). 
	\]
By taking the limit $m\to \infty$, we show that $g \in \overline{\text{conv}}\Delta_\epsilon(f)$. Hence $B_X = \overline{\text{conv}}\Delta_\epsilon(f)$,  in other words, $f$ is a Daugavet point.
\end{proof}

From this, we obtain a sufficient condition for the Daugavet property of $A(K, X)$ since the map $t\mapsto \|f(t)\|$ is continuous on a compact space and attains its maximum on $K$ if $\tau$ is norm topology and $f\in A(K, X)$. 

\begin{Corollary}\label{cor:daugavetakx}
Let $K$ be a compact Hausdorff space and let $X$ be a Banach space. Suppose that a function space $A(K, X)$ over a base algebra $A$ satisfies the additional condition that $A\otimes X\subset A(K, X)$. If the Shilov boundary $\Gamma$ of the base algebra $A$ does not have isolated points, then $A(K, X)$ has the Daugavet property.
\end{Corollary}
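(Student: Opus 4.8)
The plan is to deduce the Daugavet property of $A(K,X)$ from the characterization recalled in the introduction: by \cite[Corollary 2.3]{W}, a Banach space has the Daugavet property if and only if every point of its unit sphere is a Daugavet point. Hence it suffices to show that an arbitrary $f\in S_{A(K,X)}$ is a Daugavet point, and for this I intend to verify the hypotheses of Theorem~\ref{th:dpointakx}, namely that $f$ attains its norm at a limit point of the Shilov boundary $\Gamma$ of the base algebra $A$.

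First I would pass to the Shilov boundary using Lemma~\ref{lem:AKXisom}: the restriction map $g\mapsto g|_\Gamma$ is an isometric isomorphism of $A(K,X)$ onto $A(\Gamma,X)$, so $\|f\|=\|f|_\Gamma\|=\sup_{t\in\Gamma}\|f(t)\|_X$. Since $\tau$ is the norm topology here, $f$ is a continuous map from $K$ into $(X,\|\cdot\|)$, so $t\mapsto\|f(t)\|_X$ is a continuous real-valued function on the compact set $\Gamma$ and therefore attains its maximum: there is $t_0\in\Gamma$ with $\|f(t_0)\|_X=\|f\|=1$. Because $\Gamma$ has no isolated points, $\Gamma$ is perfect, so $t_0$ is automatically a limit point of $\Gamma$. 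As $A\otimes X\subset A(K,X)$ by hypothesis, Theorem~\ref{th:dpointakx} applies and yields that $f$ is a Daugavet point.

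Since $f\in S_{A(K,X)}$ was arbitrary, every point of the unit sphere of $A(K,X)$ is a Daugavet point, and therefore $A(K,X)$ has the Daugavet property by \cite[Corollary 2.3]{W}. There is no genuinely hard step in this argument, as the substantive work is already contained in Theorem~\ref{th:dpointakx}; the only point requiring a little care is that the norm of $f$ must be witnessed on $\Gamma$ rather than merely somewhere in $K$, which is exactly what the isometric restriction of Lemma~\ref{lem:AKXisom} provides, after which compactness of $\Gamma$ and the perfectness hypothesis do the rest.
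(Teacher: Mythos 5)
Your argument is correct and is essentially the paper's own: the paper justifies the corollary by the same observation that $t\mapsto\|f(t)\|_X$ is continuous on a compact set and attains its maximum, which under the no-isolated-points hypothesis is a limit point of $\Gamma$, so Theorem~\ref{th:dpointakx} applies to every $f\in S_{A(K,X)}$. Your version is slightly more careful than the paper's one-line justification in first restricting to $\Gamma$ via Lemma~\ref{lem:AKXisom} so that the maximum is witnessed on the Shilov boundary rather than merely on $K$, but this is a refinement of the same route, not a different one.
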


Furthermore, when $X$ is uniformly convex, Daugevet points and $\Delta$-points are the same in $A(K, X)$ and characterized by the norm-attainment at a limit point of $\Gamma$. Recall that the modulus of convexity $\delta_X(\epsilon)$ of a Banach space $X$ is defined by for $0<\epsilon<2$ 
\[ \delta_X(\epsilon) =\inf \left\{ 1- \left\|\frac{ x+y}2 \right\| : x,y\in B_X, \|x-y\|\ge \epsilon \right\}.\]
A Banach space $X$ is said to be uniformly convex if $\delta_X(\epsilon)>0$ for each $0<\epsilon<2$. 

\begin{Theorem}\label{th:deqakx}
	For a uniformly convex Banach space $X$ and a compact Hausdorff space $K$, let $\Gamma$ be the Shilov boundary of the base algebra $A$ of a function space $A(K, X)$ and let $f \in S_{A(K, X)}$.  Then (i)$\implies$ (ii) $\implies$ (iii) holds. 	
	         \begin{enumerate} [\rm(i)]
		\item $f$ is a Daugavet point.
		\item $f$ is a $\Delta$-point.
		\item there is a limit point $t_0$ of $\Gamma$ such that $\|f\| = \|f(t_0)\|_X$. 
	\end{enumerate}
	Moreover, if we assume the additional condition that $A\otimes X\subset A(K, X)$, then (i), (ii) and (iii) are equivalent.
\end{Theorem}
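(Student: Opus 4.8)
The plan is this: the implication $(i)\Rightarrow(ii)$ requires nothing, since every Daugavet point is a $\Delta$-point, and under the extra hypothesis $A\otimes X\subset A(K,X)$ the implication $(iii)\Rightarrow(i)$ is precisely Theorem~\ref{th:dpointakx} applied with $\tau$ the norm topology; so the whole content of the statement is the implication $(ii)\Rightarrow(iii)$, which I will prove by contraposition and for which uniform convexity of $X$ is essential.

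Suppose $f\in S_{A(K,X)}$ attains its norm at no limit point of $\Gamma$. By Lemma~\ref{lem:AKXisom} and the fact that being a $\Delta$-point is an isometric invariant, I may assume $K=\Gamma$. The first step is to observe that the norm-attaining set $M:=\{t\in\Gamma:\|f(t)\|_X=1\}$ is \emph{finite}: it is nonempty and closed (hence compact) because $t\mapsto\|f(t)\|_X$ is continuous and $\Gamma$ is compact, and by assumption each point of $M$ is isolated in $\Gamma$, hence isolated in $M$, so $M$ is a compact discrete set; write $M=\{s_1,\dots,s_k\}$, each $s_i$ an isolated point of $\Gamma$. Then $\Gamma\setminus M$ is clopen and therefore compact, so $\rho:=\sup_{t\in\Gamma\setminus M}\|f(t)\|_X<1$ (read as $0$ if $\Gamma=M$). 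Now fix $\epsilon$ with $0<\epsilon<\min\{1,\,1-\rho\}$, choose by Hahn--Banach $x_i^*\in S_{X^*}$ with $x_i^*(f(s_i))=1$, and define $F:=\frac{1}{k}\sum_{i=1}^k\delta_{s_i}\otimes x_i^*\in A(K,X)^*$, i.e. $F(g)=\frac{1}{k}\sum_{i=1}^k x_i^*(g(s_i))$; clearly $\|F\|\le1$ and $F(f)=1$, so $\|F\|=1$.

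The crux is to show that $F$ strictly separates $f$ from $\overline{\text{conv}}\,\Delta_\epsilon(f)$. Given $g\in\Delta_\epsilon(f)$, the continuous function $t\mapsto\|f(t)-g(t)\|_X$ attains the value $\|f-g\|\ge2-\epsilon$ at some $t^*\in\Gamma$; since $\|f(t^*)\|_X\ge(2-\epsilon)-\|g(t^*)\|_X\ge1-\epsilon>\rho$, we must have $t^*\in M$, say $t^*=s_j$. Put $u=f(s_j)$ and $v=g(s_j)$, so $\|u\|=1$, $\|v\|\le1$, $\|u-v\|\ge2-\epsilon$; then $\|v\|\ge1-\epsilon$, and with $w:=v/\|v\|$ one has $\|v-w\|=1-\|v\|\le\epsilon$, so $u$ and $w$ are unit vectors with $\|u-w\|\ge2-2\epsilon$. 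By the very definition of the modulus of convexity, $\|u+w\|\le2\bigl(1-\delta_X(2-2\epsilon)\bigr)$, and since $x_j^*(u)=1$ this gives $\text{Re}\,x_j^*(w)\le1-2\delta_X(2-2\epsilon)$; as $0\le\|v\|\le1$ it follows that $\text{Re}\,x_j^*(v)=\|v\|\,\text{Re}\,x_j^*(w)\le\beta$, where $\beta:=\max\{1-2\delta_X(2-2\epsilon),\,0\}$, and $\beta<1$ because $X$ is uniformly convex and $0<2-2\epsilon<2$. Since $\text{Re}\,x_i^*(g(s_i))\le\|g(s_i)\|_X\le1$ for every $i$, we conclude $\text{Re}\,F(g)\le1-\frac{1-\beta}{k}=:c<1$. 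Thus $\Delta_\epsilon(f)$ lies in the closed convex set $\{h:\text{Re}\,F(h)\le c\}$, hence so does $\overline{\text{conv}}\,\Delta_\epsilon(f)$; but $\text{Re}\,F(f)=1>c$, so $f\notin\overline{\text{conv}}\,\Delta_\epsilon(f)$ and $f$ is not a $\Delta$-point. This proves $(ii)\Rightarrow(iii)$, and together with the first paragraph the whole theorem.

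I expect the implication $(ii)\Rightarrow(iii)$ to be the main obstacle: one must convert the soft topological fact that the norm-attaining set avoids the limit points of $\Gamma$ into a genuine quantitative separation, and the two ingredients that make this possible are the finiteness of $M$ (so that a single finite average $F$ of Dirac-type functionals already norms $f$) and the uniform-convexity estimate (so that a near-antipodal value of $g$ at one $s_j$ pushes $\text{Re}\,F(g)$ strictly below $1$). It is worth emphasising that in this direction we have no access to test functions inside $A(K,X)$ — the hypothesis $A\otimes X\subset A(K,X)$ is not available here — so the argument must be run entirely on the dual side through the single functional $F$.
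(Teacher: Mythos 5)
Your proof is correct, and for the one nontrivial implication it takes a genuinely different route from the paper. Both arguments share the same two core ingredients: the observation that the norm-attaining set $M=\{t\in\Gamma:\|f(t)\|_X=1\}$ is a finite set of isolated points (so that the finite average $F=\frac1k\sum_i \delta_{s_i}\otimes x_i^*$ of Dirac-type functionals already norms $f$), and a uniform-convexity estimate ruling out near-antipodal behaviour at the points of $M$. The difference is in how the contradiction is extracted. The paper invokes the characterization of $\Delta$-points in Lemma~\ref{lem:deltapoint}: it builds the rank-one projection $P=\psi\otimes f$ with $\psi$ your $F$, and shows $\|I-P\|<2$ by estimating $\|g(t)-\psi(g)f(t)\|_X$ over all $g\in B_{A(K,X)}$, with a three-way case analysis ($t\notin M$; $t\in M$ with $|x_t^*(g(t))|\ge 1-\delta$, where $\delta=\delta_X(\tfrac1{2|M|})$ feeds into the estimate $\|\lambda g(t)-f(t)\|\le\tfrac1{2|M|}$; and $t\in M$ with $|x_t^*(g(t))|<1-\delta$). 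You instead work directly from the definition of a $\Delta$-point and exhibit a strict hyperplane separation: every $g\in\Delta_\epsilon(f)$ must realize $\|f-g\|\ge 2-\epsilon$ at some $s_j\in M$ (because $\rho<1-\epsilon$ off $M$), and uniform convexity applied to the normalized pair $u=f(s_j)$, $w=g(s_j)/\|g(s_j)\|$ forces $\text{Re}\,x_j^*(g(s_j))\le\beta<1$, whence $\text{Re}\,F\le 1-\frac{1-\beta}{k}$ on all of $\overline{\text{conv}}\,\Delta_\epsilon(f)$ while $F(f)=1$. Your version avoids the operator-norm computation and the auxiliary lemma entirely, needs only one fixed $\epsilon$, and makes transparent why the tensor hypothesis $A\otimes X\subset A(K,X)$ plays no role in $(ii)\Rightarrow(iii)$; the paper's version has the mild advantage of producing the quantitative statement $\|I-P\|<2$ for the norming projection, which is of independent interest. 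The remaining implications are handled exactly as in the paper: $(i)\Rightarrow(ii)$ is definitional and $(iii)\Rightarrow(i)$ is Theorem~\ref{th:dpointakx} under the extra hypothesis, after the standard reduction to $K=\Gamma$ via Lemma~\ref{lem:AKXisom}.
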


\begin{proof}
	In view of Lemma \ref{lem:AKXisom}, $A(K, X)$ is isometric to $A(\Gamma, X)$ and so it is enough to prove our claim with respect to $f_{|\Gamma} \in S_{A(\Gamma, X)}$. So we may assume that $K = \Gamma$. (i) $\implies$ (ii) is clear from their definitions. (iii) $\implies$ (i) is already shown in Theorem \ref{th:dpointakx} with the additional assumption that $A\otimes X\subset A(K, X)$. Hence we only need to show (ii) $\implies$ (iii). 
	
	
	Assume to the contrary that $f \in S_{A(K, X)}$ is a $\Delta$-point but $\|f\| \neq \|f(t)\|_X$ for every limit point of $K$. Let $F = \{ t \in K : \|f(t)\|_X = 1\}$. Then $F$ only contains isolated points of $K$ by the assumption and is nonempty. Since $K$ is compact, we also see that $|F| < \infty$. For each $t \in F$ we can always find $x_t^* \in S_{X^*}$ such that $x_t^*(f(t)) = 1$. 
	
	Since $X$ is uniformly convex, let  $\delta = \delta_X\left(\frac1{2|F|}\right)>0$, where $|F|$ is the number of elements of $F$. If $\text{Re}\, x_t^*(x) \geq 1 - \delta$ and $x\in B_X$, then
$\left \| \frac{x+f(t)}2 \right\| \geq \text{Re}\, x_t^*\left( \frac{x+f(t)}2  \right) \ge 1-\frac{1}2 \delta > 1-\delta_X\left(\frac 1{2|F|}\right)$. This means that $\|x-f(t)\|< \frac 1{2|F|}$ by the definition of $\delta_X$. Hence
	\begin{equation}\label{eq:uniconv}
		\text{if}\,\,\, \text{Re}\, x_t^*(x) \geq 1 - \delta\,\, \text{and} \, x\in B_X\,\,\text{then} \,\,\, \|x - f(t)\|_X \leq \frac{1}{2|F|} \,\,\, \text{for every} \,\,\,t \in F.
	\end{equation}
	
	Let $\epsilon = 1 - \max_{t \in K \setminus F}\|f(t)\|_X>0$. Define a bounded linear functional $\psi$ on $A(K, X)$ by 
	\[\psi(g) = \frac{1}{|F|}\sum_{t \in F} x_t^*(g(t)),\] where $g \in A(K, X)$. Notice that $\|\psi\| = \psi(f) = 1$ and this defines a bounded projection $P: A(K, X) \rightarrow A(K, X)$ by $P(g) = \psi(g)f$ for $g \in A(K, X)$. It is clear that $\|P\|=\|P(f)\| = \|f\| = 1$. We see that $P$ is in fact a norm-one projection on $A(K, X)$. Then in view of  Lemma~\ref{lem:deltapoint}, we should have $\|I - P\|\ge 2$. But we show that in fact $\|I-P\|<2$.
	
	For a given $g \in B_{A(K, X)}$, if $t \in K \setminus F$ we have
	\begin{equation}\label{eq: outF}
		\|g(t) - Pg(t)\|_X = \|g(t) - \psi(g)f(t)\|_X \leq 1 + \|f(t)\|_X \leq 2 - \epsilon.
	\end{equation}
	Now we divide $F$ into two parts. Let $t_0 \in F_1 = \{t \in F : |x_t^*(g(t))| \geq 1 - \delta\}$. Then there exists a scalar $\lambda_0 \in B_{\mathbb{C}}$ such that $|x_{t_0}^*(g(t_0))| = \lambda_0 x_{t_0}^*(g(t_0)) = x_{t_0}^*(\lambda_0 g(t_0)) \geq 1 - \delta$. Then by (\ref{eq:uniconv}), we have $\|\lambda g(t_0) - f(t)\|_X \leq \frac{1}{2|F|}$. Then
	\begin{eqnarray*}
		\|g(t_0) - Pg(t_0)\|_X &=& \left\|g(t_0) - \frac{1}{|F|}\sum_{t \in F} x_t^*(g(t)) f(t_0)\right\|_X \leq \left\|g(t_0) - \frac{1}{|F|}x_{t_0}^*(g(t_0)) f(t_0)\right\|_X + \frac{|F|-1}{|F|}\\
		&=& \left\|\lambda_0 g(t_0) - \frac{1}{|F|}x_{t_0}^*(\lambda_0 g(t_0)) f(t_0)\right\|_X + \frac{|F|-1}{|F|}.
	\end{eqnarray*}
	
From the fact that $x_{t_0}^*(\lambda_0 g(t_0)) \leq 1$, we have $1- \frac{1}{|F|}x_{t_0}^*(\lambda_0 g(t_0)) \geq 0$. Hence
	\begin{eqnarray*}
		\left\|\lambda_0 g(t_0) - \frac{1}{|F|}x_{t_0}^*(\lambda_0 g(t_0)) f(t_0)\right\|_X  &\leq& \|\lambda_0 g(t_0) - f(t_0)\|_X + \|f(t_0)\|_X\left(1- \frac{1}{|F|}x_{t_0}^*(\lambda_0 g(t_0))\right)\\
		&\leq& \frac{1}{2|F|} + 1 - \frac{1}{|F|}(1 - \delta),
	\end{eqnarray*}
	and so
	
	\begin{equation}\label{eq:Fpt1}
		\|g(t_0) - Pg(t_0)\|_X  \leq \frac{1}{2|F|} + 1 - \frac{1}{|F|}(1 - \delta) + \frac{|F|-1}{|F|} \leq 2 - \frac{1}{2|F|}.
	\end{equation}
	
	When $t_0 \in F_2 = F \setminus F_1$, we have $|x_{t_0}^*(g(t_0))| < 1 - \delta$. So we see that
	\begin{eqnarray*}
		\|g(t_0) - Pg(t_0)\|_X &\leq& \left\|\lambda_0 g(t_0) - \frac{1}{|F|}x_{t_0}^*(\lambda_0 g(t_0)) f(t_0)\right\|_X + \frac{|F|-1}{|F|}.\\
		&\leq& 1 + \frac{1}{|F|}(1 - \delta) + \frac{|F| - 1}{|F|} \leq 2 - \frac{\delta}{|F|}.
	\end{eqnarray*}
	Hence, with (\ref{eq: outF}) and (\ref{eq:Fpt1}) we obtain $\|g - Pg\| \leq \max\{2- \frac{1}{2|F|}, 2 - \frac{\delta}{|F|}, 2 - \epsilon \} < 2$. 
	Since $g \in B_{A(K, X)}$ is chosen arbitrarily,  $\|I-P\|<2$, and we get a contradiction. 
\end{proof}

Since $\mathbb{C}$ is uniformly convex, we have the following consequences for a scalar-valued uniform algebra. 
\begin{Corollary}\label{th:dpoint}
	Let $K$ be a compact Hausdorff space and let $\Gamma$ be the Shilov boundary of $A(K)$. Then for $f \in S_{A(K, X)}$, the following are equivalent:
		\begin{enumerate}[\rm(i)]
			\item $f$ is a Daugavet point.
			\item $f$ is a $\Delta$-point.
			\item  there is a limit point $t_0$ of $\Gamma$ such that $\|f\|_\infty = |f(t_0)|$.  
		\end{enumerate}			
\end{Corollary}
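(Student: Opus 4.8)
The plan is to obtain Corollary~\ref{th:dpoint} as the scalar specialization of Theorem~\ref{th:deqakx}, so essentially no new work is needed beyond checking that the hypotheses of that theorem are met when $X=\mathbb{C}$. First I would take $X=\mathbb{C}$ equipped with its norm (modulus) topology $\tau$, which is a locally convex Hausdorff topology compatible with the dual pair $(\mathbb{C},\mathbb{C})$. Then I would record that $\mathbb{C}$ is uniformly convex: its modulus of convexity is $\delta_{\mathbb{C}}(\epsilon)=1-\sqrt{1-\epsilon^2/4}>0$ for $0<\epsilon<2$ (indeed $\mathbb{C}$ is a Hilbert space), so the standing hypothesis of Theorem~\ref{th:deqakx} on $X$ holds.

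The second point to verify is that the vector-valued framework collapses to an ordinary uniform algebra. When $X=\mathbb{C}$, since $(\mathbb{C},\tau)^{*}=\mathbb{C}$ and composition with a scalar functional is just multiplication by a scalar, Definition~\ref{def:funtionspace} forces $A(K,(\mathbb{C},\tau))$ to coincide with its base algebra $A=A(K)$. In particular $A\otimes\mathbb{C}=\{\lambda\phi:\phi\in A,\ \lambda\in\mathbb{C}\}=A\subset A(K,\mathbb{C})$, so the additional hypothesis $A\otimes X\subset A(K,X)$ appearing in the ``moreover'' part of Theorem~\ref{th:deqakx} is satisfied automatically; and condition (iii) there, $\|f\|=\|f(t_{0})\|_{X}$, reads $\|f\|_{\infty}=|f(t_{0})|$ in this case.

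With these identifications, the ``moreover'' part of Theorem~\ref{th:deqakx}, which asserts the full equivalence of (i), (ii), (iii) once the tensor condition holds, applies verbatim to $f\in S_{A(K)}$ and yields exactly the claimed equivalence. I do not expect any genuine obstacle: the whole content is already carried by Theorem~\ref{th:deqakx}, and the only things to confirm are the two routine facts that $\mathbb{C}$ is uniformly convex and that the scalar-valued ``function space over a uniform algebra'' is literally the uniform algebra itself. If one wished to avoid invoking the theorem, one could rerun its proof directly for $X=\mathbb{C}$ with a finite set $F$ of isolated norming points and the elementary scalar estimate, but citing Theorem~\ref{th:deqakx} is the shortest and cleanest route.
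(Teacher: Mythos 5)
Your proposal is correct and is exactly the paper's route: the authors derive this corollary from Theorem~\ref{th:deqakx} with the single remark that $\mathbb{C}$ is uniformly convex, the identification $A(K,(\mathbb{C},\tau))=A$ and the automatic inclusion $A\otimes\mathbb{C}\subset A(K,\mathbb{C})$ being left implicit. Your explicit verification of these two routine points is a faithful (slightly more detailed) version of the same argument.
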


By the characterization given in Theorem \ref{th:deqakx}, we show when the Daugavet property, DD2P, and DLD2P of $A(K, X)$ are equivalent to each other.  
 
\begin{Theorem}
 	Let $X$ be a uniformly convex Banach space, let $K$ be a compact Hausdorff space, and let $\Gamma$ be the Shilov boundary of the base algebra of $A(K, X)$. Then (i) $\implies$ (ii) $\implies$ (iii) $\implies$ (iv) holds:
 	\begin{enumerate}[\rm(i)]
 		\item $A(K,X)$ has the Daugavet property. 
 		\item $A(K,X)$ has the DD2P.
 		\item $A(K,X)$ has the DLD2P.
 		\item The Shilov boundary $\Gamma$ does not have isolated points.
 	\end{enumerate}
	Moreover, if we assume the additional condition that $A\otimes X\subset A(K, X)$, then (i), (ii), (iii) and (iv) are equivalent.
 \end{Theorem}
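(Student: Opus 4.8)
The plan is to reduce everything to the characterization of $\Delta$-points and Daugavet points obtained in Theorem~\ref{th:deqakx}, since the DLD2P (resp.\ Daugavet property) of a Banach space is equivalent to saying that \emph{every} point of the unit sphere is a $\Delta$-point (resp.\ a Daugavet point), as recalled in the introduction via \cite{IK,W}. First I would dispose of the implications (i)$\implies$(ii)$\implies$(iii) with no extra work: (i)$\implies$(ii) is the general fact that the Daugavet property implies the DD2P \cite{BLZ}, and (ii)$\implies$(iii) is the general fact that the DD2P implies the DLD2P \cite{BLZ}. So the only nontrivial implication in the unconditional chain is (iii)$\implies$(iv).

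For (iii)$\implies$(iv), I would argue by contraposition. Suppose $\Gamma$ has an isolated point $t_0$. By Lemma~\ref{lem:auxiso} there is $\phi\in A$ with $\phi(t_0)=\|\phi\|=1$ and $\phi(t)=0$ for $t\in\Gamma\setminus\{t_0\}$; after passing to $A(\Gamma,X)$ via Lemma~\ref{lem:AKXisom}, pick any unit vector $x_0\in S_X$ and set $f=\phi\otimes x_0$, which lies in $A(\Gamma,X)$ (note $\phi f'\in A(\Gamma,X)$ for $f'$ the constant function $x_0$ is \emph{not} automatic, so instead I would take $f$ to be \emph{any} norm-one element of $A(\Gamma,X)$ whose norm is attained only on the isolated set — e.g.\ multiply a fixed $g\in S_{A(\Gamma,X)}$ by $\phi$, which does lie in the space by Definition~\ref{def:funtionspace}(ii), and rescale). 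Then $\|f\|$ is attained only at isolated points of $\Gamma$, so by the implication (ii)$\implies$(iii) of Theorem~\ref{th:deqakx} (which needs no extra hypothesis) $f$ is not a $\Delta$-point; hence $S_{A(\Gamma,X)}$ contains a non-$\Delta$-point and $A(\Gamma,X)\cong A(K,X)$ fails the DLD2P. This gives (iii)$\implies$(iv).

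For the moreover part, assume in addition $A\otimes X\subset A(K,X)$; I must close the loop by proving (iv)$\implies$(i). If $\Gamma$ has no isolated points then every point of $\Gamma$ is a limit point, so for \emph{any} $f\in S_{A(K,X)}$ the continuous function $t\mapsto\|f(t)\|_X$ attains its maximum $1$ at some $t_0\in\Gamma$ (using that $A(K,X)$ is isometric to $A(\Gamma,X)$ and $\Gamma$ is compact), and that $t_0$ is automatically a limit point of $\Gamma$. By Theorem~\ref{th:dpointakx} (which is exactly where $A\otimes X\subset A(K,X)$ is used), $f$ is a Daugavet point. Since $f\in S_{A(K,X)}$ was arbitrary, every point of the unit sphere is a Daugavet point, which by \cite[Corollary 2.3]{W} is precisely the Daugavet property of $A(K,X)$. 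Combined with (i)$\implies$(ii)$\implies$(iii)$\implies$(iv), all four statements are equivalent.

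The main obstacle I anticipate is the careful bookkeeping in (iii)$\implies$(iv): one must exhibit a genuine element of $A(\Gamma,X)$ whose norm is attained only on the (finite, by compactness) isolated part of $\Gamma$, and the natural candidate $\phi\otimes x_0$ need not belong to $A(\Gamma,X)$ unless $A\otimes X\subset A(K,X)$ is assumed; the fix is to use condition (ii) of Definition~\ref{def:funtionspace} (multiplying an arbitrary norm-one element by the peaking function $\phi$) rather than a pure elementary tensor, and then invoke the unconditional implication (ii)$\implies$(iii) of Theorem~\ref{th:deqakx}. Everything else is a direct appeal to results already in hand.
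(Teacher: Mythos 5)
Your proposal follows essentially the same route as the paper: (i)$\implies$(ii)$\implies$(iii) from \cite{BLZ}, (iv)$\implies$(i) via Theorem~\ref{th:dpointakx} (i.e.\ Corollary~\ref{cor:daugavetakx}), and (iii)$\implies$(iv) by contraposition using Lemma~\ref{lem:auxiso} together with the implication (ii)$\implies$(iii) of Theorem~\ref{th:deqakx}; you also correctly spot that $\phi\otimes x_0$ need not lie in the space and that one must instead multiply a genuine element of $A(\Gamma,X)$ by $\phi$ using Definition~\ref{def:funtionspace}(ii).

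One detail in your (iii)$\implies$(iv) step needs patching: if you multiply an \emph{arbitrary} $g\in S_{A(\Gamma,X)}$ by $\phi$, the product $\phi g$ vanishes off $\{t_0\}$, so it is identically zero whenever $g(t_0)=0$ and cannot be rescaled to norm one. You must choose $g$ with $g(t_0)\neq 0$; the paper does this by observing that $\phi$ itself belongs to the base algebra, hence $\phi=x^*\circ g$ for some $g\in A(\Gamma,X)$ and $x^*\in X^*$, and then $x^*(g(t_0))=\phi(t_0)=1$ forces $g(t_0)\neq 0$, after which $h=\phi g/\|g(t_0)\|_X$ is a norm-one element vanishing at every limit point of $\Gamma$. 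With that one-line fix your argument is complete and coincides with the paper's.
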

 
 \begin{proof}
 	 Since $A(K, X)$ is isometric to $A(\Gamma, X)$ by Lemma \ref{lem:AKXisom}, showing the equivalence for $A(\Gamma, X)$ is enough. It is a well-known that (i) $\implies$ (ii) $\implies$ (iii) \cite{BLZ}. 
 	By Corollary \ref{cor:daugavetakx}, we have (iv) $\implies$ (i) with the additional assumption  that $A\otimes X\subset A(K, X)$.
 	
 	Finally we prove  (iii) $\implies$ (iv). Assume that $A(\Gamma,X)$ has the DLD2P but the Shilov boundary $\Gamma$ of the base algebra of $A(K, X)$ has an isolated point $t_0$. Since the set of strong boundary points $\Gamma_0$ is dense in $\Gamma$, this $t_0$ is also in $\Gamma_0$. Then by Lemma \ref{lem:auxiso} there is $\phi \in A$ such that $\phi(t_0) = 1=\|\phi\|$ and $\phi(t) = 0$ for all $t \in \Gamma \setminus \{t_0\}$. Since $\phi$ is in the base algebra $A$, there is $g\in A(\Gamma, X)$ such that $\phi = x^* \circ g$ for some $x^*\in X^*$. Then $g(t_0)\neq 0$ and let $h=\frac{\phi}{\|g(t_0)\|_X} g$. So $h$ is an element of $A(K, X)$. Since $\|h\|=\|h(t_0)\|=1$ and $\|h(t)\|=0$ for every point $t$ of $\Gamma$ other than $t_0$. So $\|h(t)\|=0$ for every limit point $t$ of $\Gamma$. Hence in view of Theorem \ref{th:deqakx}, we see that $h \in S_{A(K, X)}$ is not a $\Delta$-point, and this is a contradiction to our assumption that $A(K, X)$ has the DLD2P, which states that every function in $S_{A(K, X)}$ is a $\Delta$-point. Therefore (iii) $\implies$ (iv) is proved.
 \end{proof}

As we mentioned before, it is well-known that the nonexistence of isolated points on the Shilov boundary implies that the scalar-valued uniform algebra has the Daugavet property \cite{W2, Wo}. Here we have something even further that the Daugavet property, DD2P, and DLD2P are equivalent to each other.
\begin{Corollary}\label{lemma:equiDau}
		Let $K$ be a compact Hausdorff space and let $\Gamma$ be the Shilov boundary of a uniform algebra $A$. Then the following are equivalent:
	\begin{enumerate}[\rm(i)]
		\item $A$ has the Daugavet property. 
		\item $A$ has the DD2P.
		\item $A$ has the DLD2P.
		\item The Shilov boundary $\Gamma$ does not have isolated points.
	\end{enumerate}
\end{Corollary}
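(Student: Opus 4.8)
The plan is to obtain this corollary as the scalar case of the theorem proved immediately above. A uniform algebra $A$ on a compact Hausdorff space $K$ is exactly the $X$-valued function space $A(K,X)$ over the base algebra $A$ with $X=\mathbb{C}$: here $C(K,\mathbb{C})=C(K)$, condition (i) of Definition~\ref{def:funtionspace} is precisely the assumption that $A$ is a uniform algebra, and condition (ii) holds because $A$ is multiplicatively closed. Since $\mathbb{C}$ is uniformly convex, the hypotheses of the preceding theorem are met, and its base algebra — hence its Shilov boundary — coincides with $A$ and $\Gamma(A)$.

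Next I would observe that the extra hypothesis $A\otimes X\subset A(K,X)$ needed for the full equivalence in that theorem is automatic when $X=\mathbb{C}$: the set $A\otimes\mathbb{C}$ consists of the functions $t\mapsto \lambda f(t)$ with $f\in A$ and $\lambda\in\mathbb{C}$, which is just $A=A(K,\mathbb{C})$, so the inclusion holds (with equality). Therefore the ``moreover'' clause of the preceding theorem applies verbatim, and conditions (i)--(iv) of the present corollary are equivalent.

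There is essentially no obstacle here: everything reduces to the bookkeeping that the scalar-valued uniform algebra fits the framework of Definition~\ref{def:funtionspace} and that the additional condition is vacuous, after which the result is immediate from the vector-valued statement (which in turn rests on Theorem~\ref{th:deqakx}). I would close with the remark that this sharpens the classical implication (iv)~$\implies$~(i) for uniform algebras from \cite{W2, Wo}: the a priori weaker diametral properties DD2P and DLD2P are in fact no weaker than the Daugavet property in this setting, and all three are detected by the single topological condition that the Shilov boundary be perfect.
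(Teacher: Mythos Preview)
Your proposal is correct and follows exactly the approach the paper intends: the corollary is stated without proof immediately after the vector-valued theorem, as the scalar case $X=\mathbb{C}$ (uniformly convex, with $A\otimes\mathbb{C}=A$ automatically). Your bookkeeping that $A=A(K,\mathbb{C})$ fits Definition~\ref{def:funtionspace} and that the extra hypothesis is vacuous is precisely what is needed, and nothing more.
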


We conclude this article with showing when the space $A(K, X)$ has the convex-DLD2P. But here we take a slightly different approach. To use Lemma \ref{th:urysohn} we need to find a limit point of $\Gamma$ that is also a strong boundary point, but the existence of such point does not seem to be guaranteed in general. So here we use a different version of the Urysohn-type lemma for strong peak points. For a uniform algebra $A$ over a compact Hausdorff space $K$, an element $t_0 \in K$ is said to be a strong peak point if there exists a function $f \in A$ such that $|f(t_0)| = \|f\|_{\infty}$ and $\|f\|_{\infty} > \sup\{|f(t)| : t \in K \setminus U\}$ for every open neighborhood $U$ of $t_0$. Let us denote the set of all strong peak points for $A$ by $\rho(A)$.

\begin{Lemma}\cite[Lemma 3]{KL}\label{lem:urysohnsp}
	Let $\Omega$ be a Hausdorff space and $A$ be a subalgebra of $C_b(\Omega)$. Suppose that $U$ is an open subset of $\Omega$ and $t_0 \in U \cap \rho(A)$. Then given $0< \epsilon < 1$, there exists $\phi \in A$ such that $\|\phi\| = 1 = \phi(t_0)$, $\sup_{t \in K \setminus U} |\phi(t)| < \epsilon$, and for all $ t \in \Omega$,
	\[
	|\phi(t)| + (1- \epsilon)|1 - \phi(t)| \leq 1.
	\]
\end{Lemma}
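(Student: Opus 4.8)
The plan is to reconstruct the classical Urysohn-type construction for uniform algebras (compare the proof of Theorem~\ref{th:d2pakx}, or \cite{CGK}), with the strong-peak-point hypothesis at $t_0$ replacing the usual strong-boundary-point input. Throughout I take $A$ to be norm-closed, which costs nothing in the intended applications (otherwise one passes to $\overline{A}$). Write $R_\epsilon:=\{z\in\mathbb{C}:|z|+(1-\epsilon)|1-z|\le 1\}$, a compact convex subset of $\overline{\mathbb{D}}$ containing $0$ and $1$; the inequality demanded of $\phi$ is precisely the requirement $\phi(\Omega)\subseteq R_\epsilon$.

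\emph{Step 1 (a peak function that is already small off $U$).} Since $t_0\in\rho(A)$, pick $f\in A$ peaking at $t_0$ and, after multiplying by a unimodular scalar, assume $f(t_0)=\|f\|=1$. Applying the peaking property to the neighbourhood $U$ of $t_0$ gives $r:=\sup_{t\in\Omega\setminus U}|f(t)|<1$. Replacing $f$ by a high power $f^{N}$, we may assume $r^{N}$ is as small as desired, say $r^{N}<\delta$ for any prescribed $\delta>0$, while still $f^{N}\in A$, $f^{N}(t_0)=\|f^{N}\|=1$ and $\sup_{\Omega\setminus U}|f^{N}|<\delta$.

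\emph{Step 2 (reshape the range).} Choose $\beta=\beta(\epsilon)\in(0,1)$ small (see Step~3) and put $\Psi(w)=1-(1-w)^{\beta}$ with the principal branch; since $\operatorname{Re}(1-w)\ge 0$ on $\overline{\mathbb{D}}$, $\Psi$ is holomorphic on $\mathbb{D}$ and continuous on $\overline{\mathbb{D}}$, hence a uniform limit on $\overline{\mathbb{D}}$ of polynomials $p_k$, with $\Psi(1)=1$ and $\Psi(0)=0$. Set $\phi:=\Psi\circ f^{N}$; as $\|f^{N}\|\le 1$ we have $p_k(f^{N})\in A$ and $p_k(f^{N})\to\phi$ uniformly on $\Omega$, so $\phi\in A$, and $\phi(t_0)=\Psi(1)=1$. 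Granting $\Psi(\overline{\mathbb{D}})\subseteq R_\epsilon$ (Step~3), we get $\phi(\Omega)\subseteq R_\epsilon\subseteq\overline{\mathbb{D}}$, so $\|\phi\|\le 1$, hence $\|\phi\|=1$, and the Stolz inequality holds at every $t$. Finally $f^{N}$ maps $\Omega\setminus U$ into the disk $\{|\zeta|\le r^{N}\}$, and $\Psi$ is continuous at $0$ with $\Psi(0)=0$, so choosing $\delta$ (hence $N$) small enough at the outset forces $\sup_{\Omega\setminus U}|\phi|<\epsilon$.

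\emph{Step 3 (the geometry, and the main obstacle).} Writing $1-w=\rho e^{i\theta}$ with $\theta\in[-\tfrac{\pi}{2},\tfrac{\pi}{2}]$ and $0\le\rho\le 2\cos\theta$, one has $\Psi(w)=1-\rho^{\beta}e^{i\beta\theta}$, and since $\rho^{\beta}\le 2^{\beta}<\tfrac{1}{1-\epsilon}$ for $\beta$ small, the membership $\Psi(w)\in R_\epsilon$ is, after squaring $|\Psi(w)|\le 1-(1-\epsilon)\rho^{\beta}$, equivalent to $\rho^{\beta}\,\epsilon(2-\epsilon)\le 2\bigl(\cos(\beta\theta)-(1-\epsilon)\bigr)$. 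Using $\rho^{\beta}\le 2^{\beta}$ and $2\cos(\beta\theta)-2\ge -\beta^{2}\theta^{2}\ge -\beta^{2}\pi^{2}/4$, the excess of the right side over the left is at least $\epsilon\bigl(2-2^{\beta}(2-\epsilon)\bigr)-\beta^{2}\pi^{2}/4$, which tends to $\epsilon^{2}>0$ as $\beta\to 0^{+}$; hence the inequality holds for all $\beta$ small in terms of $\epsilon$. (Alternatively, $R_\epsilon$ is a convex Jordan domain, being a sublevel set of the convex function $z\mapsto|z|+(1-\epsilon)|1-z|$, so the Riemann mapping theorem together with the boundary correspondence for Jordan domains supplies a disk-algebra map $\Psi$ with $\Psi(\overline{\mathbb{D}})\subseteq R_\epsilon$, $\Psi(1)=1$, $\Psi(0)=0$, which is all Step~2 needs.) I expect this step — exhibiting a concrete holomorphic function from $\mathbb{D}$ into the Stolz region $R_\epsilon$ that fixes $0$ and $1$ — to be the only real difficulty; Steps~1--2 are bookkeeping once it is in hand.
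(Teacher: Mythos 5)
The paper does not prove this lemma at all --- it is imported verbatim from \cite{KL} (with only a remark that the modulus on $\phi(t_0)$ can be removed) --- so there is no in-paper argument to compare against; what you have written is a genuine, self-contained proof, and it is correct. Your strategy is the standard one behind both \cite{CGK} and \cite{KL}: produce a function peaking at $t_0$ that is uniformly small off $U$, then post-compose with a disk-algebra self-map fixing $0$ and $1$ whose range lies in the Stolz region $R_\epsilon$. The strong-peak-point hypothesis is exactly what lets you skip the iterative construction of \cite{CGK} and get the first ingredient by simply normalizing and powering a single peak function, and your explicit choice $\Psi(w)=1-(1-w)^{\beta}$ is a clean substitute for the conformal map onto $R_\epsilon$; the verification in Step~3 (squaring is legitimate because $2^{\beta}<\tfrac{1}{1-\epsilon}$, and the excess tends to $\epsilon^{2}>0$ as $\beta\to 0^{+}$) is complete, and the order of choices ($\beta$ from $\epsilon$, then $\delta$ from $\beta$, then $N$ from $\delta$) is non-circular. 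Two small points of hygiene, neither a gap: (a) you need $A$ norm-closed for the uniform limit $\lim_k p_k(f^N)$ to land in $A$ --- you flag this, and it matches both the original hypothesis in \cite{KL} and every use of the lemma in this paper, where $A$ is a uniform algebra; (b) since the statement only assumes $A$ is a subalgebra (possibly without constants), you should take the approximating polynomials $p_k$ with $p_k(0)=0$, which is harmless because $\Psi(0)=0$, so that $p_k(f^{N})\in A$. The parenthetical Riemann-mapping alternative also works but needs Carath\'eodory boundary extension plus a rotation to send $1\mapsto 1$; the explicit $\Psi$ is preferable precisely because it avoids that.
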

The original statement of Lemma \ref{lem:urysohnsp} is given with $\|\phi\| = 1 = |\phi(t_0)|$. However, if we observe the proof of Lemma 3 in \cite{KL} carefully, it is easy to see that the statement without the absolute value also holds.  
  
\begin{Theorem}
	For a compact Hausdorff space $K$ and a uniformly convex Banach space $X$, suppose that a function space $A(K, X)$ over a base algebra $A$ satisfies the additional condition that $A \otimes X \subset A(K, X)$. Let $\Gamma'$ be the set of limit points in the Shilov boundary $\Gamma$ of the base algebra $A$ and let $\rho(A)$ the set of all strong peak points for $A$. If $\rho(A) \cap \Gamma' \neq \emptyset$, then the space $A(K, X)$ has the convex-DLD2P.
\end{Theorem}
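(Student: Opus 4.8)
The plan is to show that the closed convex hull of the $\Delta$-points of $A(K,X)$ is all of $B_{A(K,X)}$, by exhibiting, for an arbitrary $g\in B_{A(K,X)}$ and arbitrary $\epsilon>0$, a convex combination of $\Delta$-points within $\epsilon$ of $g$. As usual, by Lemma~\ref{lem:AKXisom} we may replace $K$ by $\Gamma$, so we work in $A(\Gamma,X)$ with base algebra $A=A(\Gamma)$. Fix a point $t_0\in\rho(A)\cap\Gamma'$; this is the point at which the Urysohn-type lemma for strong peak points (Lemma~\ref{lem:urysohnsp}) will be applied, and the fact that $t_0$ is simultaneously a limit point of $\Gamma$ is what lets us invoke the characterization of $\Delta$-points from Theorem~\ref{th:deqakx} once we have built the relevant functions.

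The key construction mirrors the proof of Theorem~\ref{th:dpointakx}. Since $t_0$ is a limit point of $\Gamma$ and $\Gamma$ is Hausdorff, choose pairwise disjoint nonempty open sets $U_1,U_2,\dots$, all contained in a fixed neighborhood $U$ of $t_0$ on which the contribution from a chosen norm-attaining direction is controlled. The subtlety — and this is why one uses $\rho(A)$ rather than $\Gamma_0$ — is that the $U_n$ need not meet $\Gamma_0$, so Lemma~\ref{th:urysohn} is unavailable; instead, we need strong peak points inside each $U_n$. Here I would argue that, because $t_0\in\rho(A)$, one can choose the $U_n$ to be a neighborhood basis-type shrinking sequence at $t_0$ (or otherwise use that $\rho(A)$ accumulates appropriately near $t_0$), and apply Lemma~\ref{lem:urysohnsp} at $t_0$ itself with the open sets $U_n$, obtaining $\phi_n\in A$ with $\phi_n(t_0)=\|\phi_n\|_\infty=1$, $|\phi_n(t)|<\epsilon/(3\cdot 2^n)$ off $U_n$, and the Stolz-type estimate $|\phi_n(t)|+(1-\epsilon/(3\cdot2^n))|1-\phi_n(t)|\le 1$. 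Then, picking $x^*\in S_{X^*}$ with $x^*(f)=1$ for a unit vector $f\in X$ realized as a value (using $A\otimes X\subset A(K,X)$ to have such functions available), set for each $n$
\[
g_n(t)=\Bigl(1-\tfrac{\epsilon}{3\cdot2^n}\Bigr)\bigl(1-\phi_n(t)\bigr)g(t)-\phi_n(t)\,(f\otimes x)(t_0),
\]
exactly as in Theorem~\ref{th:dpointakx}. The same computations show $g_n\in B_{A(\Gamma,X)}$, that $g_n$ attains norm one at $t_0$ (a limit point of $\Gamma$), hence $g_n\in S_{A(\Gamma,X)}$ and by Theorem~\ref{th:deqakx} each $g_n$ is a $\Delta$-point, and that $\|g-g_n\|_X\le 2$ on $U_n$ while $\|g(t)-g_n(t)\|_X<\epsilon/2^n$ off $U_n$; averaging $\frac1m\sum_{n=1}^m g_n$ and using disjointness of the $U_n$ gives $\|g-\frac1m\sum_{n=1}^m g_n\|\le \frac1m(2+\epsilon)\to 0$. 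Therefore $g\in\overline{\operatorname{conv}}\,\Delta_{A(\Gamma,X)}$, and since $g$ was arbitrary, $B_{A(\Gamma,X)}=\overline{\operatorname{conv}}\,\Delta_{A(\Gamma,X)}$, i.e. $A(K,X)$ has the convex-DLD2P.

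The main obstacle is the step of producing the functions $\phi_n$: Lemma~\ref{lem:urysohnsp} is stated for a single open set $U$ and a point $t_0\in U\cap\rho(A)$, so to get a whole sequence $\phi_n$ with disjoint "peaking regions" one must carefully arrange the $U_n$ around $t_0$ (exploiting that $t_0$ is a limit point so that $\Gamma$ is not locally a single point there) and check that applying the lemma to each $U_n$ at $t_0$ is legitimate — in particular that $t_0\in U_n$ and $|\phi_n|$ is genuinely small off $U_n$. Once the $\phi_n$ are in hand, the remainder is a routine transcription of the estimates in the proof of Theorem~\ref{th:dpointakx}, with the role of "$f$ attains its norm at $t_0$" replaced by "$g_n$ attains its norm at $t_0$," which is built into the construction and feeds directly into Theorem~\ref{th:deqakx} to certify the $\Delta$-point property.
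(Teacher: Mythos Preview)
Your approach has a genuine gap, and it is precisely the one you flag as ``the main obstacle'': you cannot have pairwise disjoint open sets $U_1,U_2,\dots$ all containing $t_0$. If the $U_n$ are disjoint, then at most one of them contains $t_0$, so Lemma~\ref{lem:urysohnsp} cannot be applied at $t_0$ for each $U_n$. Your alternative suggestion --- taking the $U_n$ to be a shrinking neighborhood basis at $t_0$ --- destroys disjointness, and then the averaging estimate collapses (on the smallest $U_m$, every $g_n$ may differ from $g$ by as much as $2$). You also mention hoping that ``$\rho(A)$ accumulates appropriately near $t_0$'', but the hypothesis gives you only a single point in $\rho(A)\cap\Gamma'$; nothing ensures other strong peak points nearby. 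Finally, your remark that ``the $U_n$ need not meet $\Gamma_0$'' is incorrect --- $\Gamma_0$ is dense in $\Gamma$ --- but using Lemma~\ref{th:urysohn} instead does not help: it produces peaking points $t_n\in U_n\cap\Gamma_0$ that could well be isolated in $\Gamma$, so the resulting $g_n$ need not attain their norm at a limit point and hence need not be $\Delta$-points by Theorem~\ref{th:deqakx}.

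The paper sidesteps this tension entirely by abandoning the averaging scheme. It uses a \emph{single} Urysohn function $\phi$ peaking at $t_0$ (via Lemma~\ref{lem:urysohnsp}) and builds just \emph{two} functions
\[
f_1=(1-\epsilon)(1-\phi)f+\phi\,x_0,\qquad f_2=(1-\epsilon)(1-\phi)f-\phi\,x_0,
\]
where $x_0$ is $f(t_0)/\|f(t_0)\|_X$ (or any unit vector if $f(t_0)=0$). Both satisfy $\|f_i(t_0)\|_X=1$, so both are $\Delta$-points by Theorem~\ref{th:deqakx}. The key is the choice of convex coefficient $\lambda=\tfrac{1+\|f(t_0)\|_X}{2}$: then $\lambda f_1+(1-\lambda)f_2=(1-\epsilon)(1-\phi)f+\phi\,f(t_0)$, which is within $4\epsilon$ of $f$ by the usual on-$U$/off-$U$ estimates. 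Two $\Delta$-points suffice; no disjoint family is needed.
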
  

\begin{proof}
	By the isometry in Lemma~\ref{lem:AKXisom}, we may assume that $K=\Gamma$. We need to show that $S_{A(K, X)} \subset \overline{\text{conv}}\Delta$, where $\Delta$ is the set of all $\Delta$-points of $A(K, X)$. 
	
Fix $f \in S_{A(K, X)}$. Pick $t_0 \in \rho(A) \cap K'$ and let $\lambda = \frac{1 + \|f(t_0)\|_X}{2}$.
For $\epsilon > 0$, let $U$ be an open neighborhood containing $t_0$ such that $\|f(t) - f(t_0)\|_X < \epsilon$. By Lemma \ref{lem:urysohnsp}, there exists a function $\phi \in A$ such that $\phi(t_0) = \|\phi\|_\infty = 1$,  $\sup_{t \in K \setminus U}|\phi(t)| < \epsilon$ and
\begin{equation*}
|\phi(t)| + (1 - \epsilon)|1 - \phi(t)| \leq 1.
\end{equation*}
for every $t \in K$. Choose a norm-one vector $v_0 \in X$ and set 
\[x_0=\begin{cases} \frac{f(t_0)}{\|f(t_0)\|_X} &\mbox{if } f(t_0) \neq 0 \\
v_0  &\mbox{if } f(t_0)=0.
\end{cases} \]
Now define two functions
\begin{align*}
f_1(t) &= (1 - \epsilon)(1 - \phi(t))f(t) + \phi(t)x_0\\
f_2(t) &= (1 - \epsilon)(1 - \phi(t))f(t) - \phi(t)x_0, \ \ \ t\in K.
\end{align*}
Since $A \otimes X \subset A(K, X)$, we see that the functions $f_1, f_2 \in A(K, X)$. Notice that
\begin{align*}
\|f_1(t)\|_X &=\left\|(1 - \epsilon)(1 - \phi(t))f(t) + \phi(t) x_0\right\|_X \\&\leq (1 - \epsilon)|1 - \phi(t)| + |\phi(t)| \leq 1,  
\end{align*}
for every $t \in K$, in particular, $\|f_1(t_0)\|_X = 1$. Hence $\|f_1\| = \|f_1(t_0)\|_X = 1$. We can also show $\|f_2\| = \|f_2(t_0)\|_X = 1$ by the same argument. Thus by Theorem \ref{th:deqakx}, we see that $f_1, f_2 \in \Delta$. Let $g(t) = \lambda f_1(t) + (1 - \lambda) f_2(t)$. We consider two cases. 
\begin{enumerate}[\rm(i)]
	\item First, consider when $f(t_0) \neq 0$. Then $g  = (1 - \epsilon)(1 - \phi(t))f(t) + \phi(t)f(t_0)$. We see that 
	\begin{eqnarray*}
		\|g(t) - f(t)\|_X &=& \| (1 - \epsilon)(1 - \phi(t))f(t) + \phi(t)f(t_0) - f(t)\|_X\\
		&=& \| (1 - \epsilon)(1 - \phi(t))f(t) + \phi(t)f(t_0) - (1 -\epsilon)f(t) - \epsilon f(t)\|_X\\
		&=& \|(1-\epsilon)(- \phi(t))f(t) +  (1 - \epsilon)\phi(t)f(t_0) + \epsilon\phi(t)f(t_0) - \epsilon f(t)\|_X\\
		&=&\|(1 - \epsilon)\phi(t)(f(t_0) - f(t)) + \epsilon\phi(t)f(t_0) - \epsilon f(t)\|_X\\
		&\leq& (1 -\epsilon)|\phi(t)|\cdot \|f(t) - f(t_0)\|_X +  \epsilon|\phi(t)|\cdot\|f(t_0)\|_X+ \epsilon \|f(t)\|_X\\
		&\leq & (1 -\epsilon)|\phi(t)|\cdot \|f(t) - f(t_0)\|_X + 2\epsilon.
	\end{eqnarray*}
	
	For $t \in U$, we have $(1 -\epsilon)|\phi(t)|\cdot \|f(t) - f(t_0)\|_X \leq (1 -\epsilon)\epsilon< \epsilon$.  Since $|\phi(t)| < \epsilon$ for $t \in K \setminus U$, we have $(1 -\epsilon)|\phi(t)|\cdot \|f(t) - f(t_0)\|_X \leq 2(1 -\epsilon)\epsilon < 2 \epsilon$ by the triangle inequality. Therefore $\|g - f\| < 4 \epsilon$, and this implies that $f \in \overline{\text{conv}}\Delta$.
	
	\item Now consider when $f(t_0) = 0$. Then for all $t \in U$, $\|f(t)\|_X < \epsilon$. Notice that $\lambda = \frac{1}{2}$ and $g(t) = (1 - \epsilon)(1 - \phi(t))f(t)$. Hence 
	\begin{eqnarray*}
		\|g(t) - f(t)\|_X &=& \|(1 - \epsilon)(1 - \phi(t))f(t)  - (1 - \epsilon)f(t) - \epsilon f(t)\|_X\\
		&\leq& (1- \epsilon)|\phi(t)|\cdot \|f(t)\|_X + \epsilon \|f(t)\|_X \leq (1- \epsilon)|\phi(t)|\cdot \|f(t)\|_X + \epsilon.
	\end{eqnarray*}
	For $t \in U$, we have $(1- \epsilon)|\phi(t)|\cdot \|f(t)\|_X \leq (1 - \epsilon)\epsilon < \epsilon$. In view of Lemma \ref{lem:urysohnsp}, since $\sup_{t \in K \setminus U} |\phi(t)| < \epsilon$ for $t \in K \setminus U$, we have $(1- \epsilon)|\phi(t)|\cdot \|f(t)\|_X \leq (1- \epsilon)\epsilon < \epsilon$. Therefore $\|g - f\| < 2 \epsilon$, and this also implies that $f \in \overline{\text{conv}}\Delta$. 
\end{enumerate}
	  Since $f \in S_{A(K, X)}$ is arbitrary, we see that $S_{A(K,X)} \subset \overline{\text{conv}}(\Delta)$, and so $A(K, X)$ has the convex-DLD2P.        		
\end{proof}

\begin{corollary}\label{th:convexdld2p}
	Let $K$ be a compact Hausdorff space and $\Gamma'$ be the set of limit points in the Shilov boundary $\Gamma$ of a uniform algebra $A$. If $\rho(A) \cap \Gamma' \neq \emptyset$, then the uniform algebra $A$ has the convex-DLD2P.
\end{corollary}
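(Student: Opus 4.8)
The plan is to obtain this corollary as the scalar-valued instance of the theorem just proved, by taking $X = \mathbb{C}$. The first thing to check is that $\mathbb{C}$, regarded as a one-dimensional complex Banach space, is uniformly convex: its modulus of convexity agrees with that of a two-dimensional real Hilbert space, so $\delta_{\mathbb{C}}(\epsilon) = 1 - \sqrt{1 - \epsilon^2/4} > 0$ for every $0 < \epsilon < 2$. Thus the uniform convexity hypothesis of the theorem is met.

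Next I would translate the remaining hypotheses. For $X = \mathbb{C}$, Definition~\ref{def:funtionspace} degenerates: since $(\mathbb{C})^* = \mathbb{C}$ and the composition of a $\mathbb{C}$-valued function with a scalar functional is just a scalar multiple of that function, the base algebra of $A(K,\mathbb{C})$ coincides with $A(K,\mathbb{C})$ itself, and one recovers precisely a uniform algebra $A$ on $K$ (this is already remarked just after Definition~\ref{def:funtionspace}). The extra condition $A \otimes X \subset A(K,X)$ then reads $A \otimes \mathbb{C} \subset A$, which holds automatically because $A \otimes \mathbb{C} = A$. The Shilov boundary of the base algebra, its set $\Gamma'$ of limit points, and the set $\rho(A)$ of strong peak points are exactly the objects appearing in the corollary.

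Consequently, assuming $\rho(A) \cap \Gamma' \neq \emptyset$, the preceding theorem applies directly and yields that $A(K,\mathbb{C}) = A$ has the convex-DLD2P, which is precisely the assertion. I do not expect any genuine obstacle here: the argument is a pure specialization of the previous result, and the only point worth spelling out is the elementary verification that the scalar field is uniformly convex, which guarantees that the theorem is applicable.
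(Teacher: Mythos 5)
Your proposal is correct and matches the paper's (implicit) argument exactly: the corollary is obtained by specializing the preceding theorem to $X=\mathbb{C}$, using that $\mathbb{C}$ is uniformly convex, that $A(K,\mathbb{C})$ reduces to the uniform algebra $A$ itself, and that $A\otimes\mathbb{C}\subset A$ holds trivially. No further comment is needed.
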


\begin{Remark}
	The sufficient condition $\rho(A) \cap \Gamma' \neq \emptyset$ in Corollary~\ref{th:convexdld2p} does not guarantee the DLD2P. For example, let $K = \{\frac{1}{n} : n \in \mathbb{N}\} \cup \{0\}$ in the real line and let $A = C(K)$ be the space of continuous functions over $K$ equipped with the supremum norm. Notice that $K=\rho(A)$ is its Shilov boundary and $\rho(A) \cap \Gamma' = \{0\}$. So in view of Theorem~\ref{lemma:equiDau}  and Theorem \ref{th:convexdld2p}, the space $C(K)$ has the convex-DLD2P, while it does not have DLD2P. In fact, $C(K)$ is isometric to the space $c$ of convergent sequences. The same results in the space $c$ were shown in \cite[Corollary 5.4, Remark 5.5]{AHLP}.    
\end{Remark}

\end{document}